\numberwithin{equation}{section}
\newcommand{\CC}{\mathbb{C}}
\newcommand{\PP}{\mathbb{P}}
\newcommand{\bM}{\mathbf{M}}
\newcommand{\bQ}{\mathbf{Q}}
\newcommand{\cal}{\mathcal}
\def\cC{{\cal C}}
\def\cE{{\cal E}}
\def\cF{{\cal F}}
\def\cK{{\cal K}}
\def\cL{{\cal L}}
\def\cO{{\cal O}}
\def\cU{{\cal U}}
\def\cI{{\cal I}}
\newcommand{\ses}[3]{0\lr{#1}\lr{#2}\lr{#3}\lr 0}
\def\lr{\rightarrow}
\DeclareMathOperator{\Hom}{Hom} 
\newtheorem{prop}{Proposition}[section]
\newtheorem{theo}[prop]{Theorem}
\newtheorem{lemm}[prop]{Lemma}
\newtheorem{coro}[prop]{Corollary}
\theoremstyle{definition}
\newtheorem{exam}[prop]{Example}
\newtheorem{defi}[prop]{Definition}
\newtheorem{rema}[prop]{Remark}
\newcommand{\opt}{\mathcal{O}_{\mathbb{P}^2}}
\newcommand{\oo}{\mathcal{O}}
\newcommand{\pt}{{\mathbb{P}^2}}
\newcommand{\pp}{\mathbb{P}}
\newcommand{\bN}{\mathbf{N}_d}
\title{The geometry of the moduli space of one-dimensional sheaves}
\author{Jinwon Choi}
\address{School of Mathematics, Korea Institute for Advanced Study, Seoul 130-722, Korea}
\email{jwchoi@kias.re.kr}
\author{Kiryong Chung}
\address{School of Mathematics, Korea Institute for Advanced Study, Seoul 130-722, Korea}
\email{krjung@kias.re.kr}
\keywords{Effective cone, Nef cone, Birational morphism, Determinant line bundle, Bridgeland wall-crossing, Chow ring}
\subjclass[2010]{14E30.}
\begin{document}
\begin{abstract}
 Let $\mathbf{M}_d$ be the moduli space of stable sheaves on $\mathbb{P}^2$ with Hilbert polynomial $dm+1$. In this paper, we determine the effective and the nef cone of the space $\mathbf{M}_d$ by natural geometric divisors. Main idea is to use the wall-crossing on the space of Bridgeland stability conditions and to compute the intersection numbers of divisors with curves by using the Grothendieck-Riemann-Roch theorem. We also present the stable base locus decomposition of the space $\mathbf{M}_6$. As a byproduct, we obtain the Betti numbers of the moduli spaces, which confirm the prediction in physics.
\end{abstract}

\maketitle

\section{Introduction}
In the middle of 1990's, many algebraic geometers have studied the ample cone of moduli space of vector bundles, or more generally, torsion free sheaves on a projective variety. 
 Recently in \cite{bayer}, the authors provided a canonical way to determine effective and nef divisors of moduli space of semistable objects in a derived category. In this paper, we study the nef cone of the moduli space of one-dimensional sheaves on the projective plane.

Let $\bM_d:=\bM(d,1)$ be the moduli space of stable sheaves with Hilbert polynomial $dm+1$ on $\PP^2$. We find the extremal rays of the nef cone of $\bM_d$ in term of geometric divisors arising from the Fitting map (\cite{lepot1}) and the wall crossing studied in \cite{cc1}. In \cite{lepot1}, Le Potier described the generators of the Picard group of $\bM_d$ by the group homomorphism
$$
\lambda:\mbox{K}(\PP^2)\lr \mbox{K}(\PP^2\times \bM_d)\lr \mbox{K}(\bM_d)\lr \mbox{Pic}(\bM_d)
$$
defined by $\lambda(k)=\mbox{det}(p_{!}(\cF\otimes q^*k))$ for $k\in \mbox{K}(\PP^2)$, $\cF$ is a universal sheaf on $\PP^2\times \bM_d$, and $p$ (resp. $q$) is the projection map $\PP^2\times \bM_d$ onto the first (resp. second) factor. In general, the homomorphism $\lambda$ is restricted to the orthogoal in $K(\PP^2)$ of the class of the sheaves in $\bM_d$, so that $\lambda$ is independent of choice of the universal sheaf (\cite[Lemma 8.1.2]{HL}).
For the basic properties of this homomorphism, see \cite[Chapter 8]{HL}.   We have two divisors
$$
A:=\lambda(\cO_{p}), D:=\lambda(-d\cO_{\PP^2} +\cO_{l})
$$
which generate the Picard group of the space $\bM_d$, where $p$ is a point and $l$ is a line. Geometrically, the divisor $A$ is supported on the locus of stable sheaves whose support contains a fixed point $p$ in $\PP^2$.
The divisor $D$ is a generalization of the \emph{theta} divisor on the Jacobian variety of a smooth projective curve (Lemma \ref{jac}).
Unlike the moduli space of vector bundle on a smooth curve, the Brill-Noether locus, i.e. the locus of stable sheaves $F$ with $h^0(F)\geq2$ in our moduli space is no more a divisor (c.f \cite[Proposition 4.2]{cc1.5}). One can instead consider another geometric divisor arising from the relative Hilbert scheme of points. From \cite{cc1}, we know that the moduli space $\bM_d$ is birational with the relative Hilbert scheme of $g=\frac{(d-1)(d-2)}{2}$ points over the universal family $\cC\longrightarrow |\cO_{\PP^2}(d)|$ of degree $d$ curves.
\begin{defi}
Let $L$ be the divisor in $\bM_d$ corresponding to the divisor of the relative Hilbert scheme consisting of pairs $(C,Z)$ such that $Z$ passes through a fixed line in $\PP^2$.
\end{defi}
One can give a scheme theoretic definition of the divisor $L$ by using the proof of \cite[Proposition 1.1]{yuan2}. It turns out that the divisor $L$ is an extremal ray of the effective cone of the moduli space $\bM_d$ (Lemma \ref{eff}).
The main theorem of this paper is the following.
\begin{theo}\label{mainthm}
The two extremal rays of the nef cone of the moduli space $\bM_d$ are generated by the divisors $A$ and
$$
B=\left\{
    \begin{array}{ll}
      \frac{(d-2)^2(d+2)}{8}A+L, & \text{if } d \text{ is even}; \\
      \frac{(d-1)(d+4)(d-3)}{8}A+L, & \text{if } d \text{ is odd}
    \end{array}
  \right.
$$
\end{theo}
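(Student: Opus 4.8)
The plan is to exploit that $\mathrm{Pic}(\bM_d)$ has rank two, so $\mathrm{Nef}(\bM_d)$ is a two‑dimensional cone and it suffices to produce its two boundary rays together with, for each, an effective curve on which the corresponding divisor vanishes. I will work throughout in the $\RR$‑basis $\{A,L\}$ and locate the two extremal nef classes as the images, under the positivity construction of \cite{bayer}, of the two walls bounding the Gieseker chamber of Bridgeland stability conditions whose moduli space is $\bM_d$.

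First I would dispose of the ray $A$. The Fitting/support morphism $\pi\colon \bM_d\to |\oo_{\pt}(d)|\cong\PP^N$, sending a stable sheaf to its Fitting support, identifies $A=\lambda(\oo_p)$ with $\pi^*\oo_{\PP^N}(1)$: the locus of sheaves whose support contains $p$ is exactly $\pi^{-1}(H_p)$ for the hyperplane $H_p$ of degree‑$d$ curves through $p$. Hence $A$ is nef, and any rational curve $E_A$ inside a fibre of $\pi$ — for instance one in the compactified Jacobian of a reducible support curve, obtained by varying the gluing of line bundles on the two components — satisfies $A\cdot E_A=0$, placing $A$ on one boundary ray.

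For the other ray I would use wall‑crossing. Realising $\bM_d$ as the moduli of $\sigma$‑stable objects of class $v=(0,d,1-\tfrac{3d}{2})$ with $\sigma$ in the Gieseker chamber, the positivity lemma of \cite{bayer} attaches to each $\sigma$ in the closure of this chamber a nef class $\ell_\sigma$ and carries the two bounding walls to the two boundary rays of $\mathrm{Nef}(\bM_d)$. Thus $B=\ell_{\sigma_0}$ for the non‑large‑volume wall $\sigma_0$ and is automatically nef; it lies on the boundary because at $\sigma_0$ some objects become strictly semistable, and a family of non‑isomorphic extensions sharing the same Jordan--H\"older factors gives a curve $E_B$ contracted across $\sigma_0$, i.e. $B\cdot E_B=0$. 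Identifying $\sigma_0$ amounts to finding the destabilising sub‑object $F_1\hookrightarrow F$ of maximal slope compatible with the integrality of $\mathrm{ch}(F_1)$; the most balanced admissible splitting $d=d_1+d_2$ differs according to the parity of $d$, and this is the source of the two formulas for $B$. The first genuine obstacle is precisely to pin down $\sigma_0$ — including the parity‑dependent $\mathrm{ch}(F_1)$ — and to check that no further wall intervenes inside the Gieseker chamber (so that this chamber really is bounded by the $A$‑ and $B$‑walls from \cite{cc1}).

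It then remains to fix the coefficient, for which I would compute intersection numbers on $E_B$. Writing $\cF$ for the universal family on $\pt\times E_B$ and applying Grothendieck--Riemann--Roch to the projection $p\colon \pt\times E_B\to E_B$, one evaluates $A\cdot E_B=\deg\lambda_{\cF}(\oo_p)$ and $L\cdot E_B$ (through $D=\lambda(-d\oo_{\pt}+\oo_l)$ and Lemma \ref{jac}); normalising the $L$‑coefficient to $1$, the relation $B\cdot E_B=0$ forces $B=\alpha_d A+L$ with $\alpha_d=-(L\cdot E_B)/(A\cdot E_B)$, which after inserting the parity‑dependent $\mathrm{ch}(F_1)$ I expect to reduce to $\tfrac{(d-2)^2(d+2)}{8}$ for even $d$ and $\tfrac{(d-1)(d+4)(d-3)}{8}$ for odd $d$. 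Since $A$ and $B$ are then both nef and each is orthogonal to an effective curve, we conclude $\mathrm{Nef}(\bM_d)=\langle A,B\rangle$, with $L$ — extremal in the effective cone by Lemma \ref{eff} — lying just outside on the $B$‑side. The heart of the argument is thus the combination of the wall identification above with this GRR bookkeeping for the determinant line bundles; the nef‑ness of $B$ itself is essentially free from the positivity lemma once $\sigma_0$ is correctly located.
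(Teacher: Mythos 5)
Your proposal follows essentially the same route as the paper's proof: nefness of $A$ via the Fitting map, identification of the first Bridgeland wall with its parity-dependent destabilizing objects ($I_{Z,\PP^2}(\frac{d-2}{2})$ with $|Z|=\frac{d-2}{2}$ for even $d$, $\cO(\frac{d-3}{2})$ for odd $d$), the Bayer--Macr\`i positivity result for nefness and extremality of the wall divisor, and GRR bookkeeping against a curve with fixed Jordan--H\"older factors at the wall (the paper's pencil $P$ of degree-$d$ curves through fixed base points is exactly such a family of extensions), combined with the relation $D=(1-d)A+L$ to express everything in the $\{A,L\}$ basis. The one step you explicitly defer --- locating $\sigma_0$ and its parity-dependent destabilizing object --- is handled in the paper by citing \cite[Proposition 7.5]{woolf}, so your sketch is a faithful outline of the paper's argument.
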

The proof of Theorem \ref{mainthm} is as follows. The divisor $A$ is the pull-back of the hyperplane divisor over the complete linear system $|\cO_{\PP^2}(d)|$ along the Fitting map $\bM_d\lr |\cO(d)|$. Thus the divisor $A$ is nef. To find the other ray, firstly, we express the determinant line bundle $D$ as a linear combination of the geometric divisors $A$ and $L$. Secondly, by using the result in \cite{woolf}, we describe the first wall-crossing center in the sense of Bridgeland in terms of the relative Hilbert scheme of points. Then by the result of Bayer and Macr\`i (\cite{bayer}), we find a numerical divisor $B$ as the other ray of the nef cone.
\begin{rema}
Like the case of the moduli space of vector bundles, it is meaningful to find the geometric meaning of the \emph{numerical} divisor $B$.
\end{rema}
Our computation of the nef cone of the moduli space $\bM_d$ is essentially the same as that of Woolf (\cite{woolf}). We apologize for any overlap with \cite{woolf} although our motivation is totally independent.

In \S 3, by using the Bridgeland wall-crossing, we find the stable base locus decomposition and compute the Poincar\'e polynomial of the moduli space $\bM_6$ (Table \ref{tab:M6} and Theorem \ref{thm:m6}). The Poincar\'e polynomials of $\bM_d$ have been predicted in physics by means of PT-BPS correspondence and the B-model computation of refined BPS index \cite{ckk,hkk}. When $d\le 5$, it has been rigorously checked by several methods \cite{cc1,yuan2,cm,maicantorus}. In the authors' knowledge, our result for $d=6$ is new.

\medskip
\textbf{Acknowledgement.}
We would like to thank D. Chen, S. Katz, H.-B. Moon, K. Yoshioka and M. Woolf for helpful discussions and comments. The first author was partially supported by TJ Park Science Fellowship of POSCO TJ Park Foundation and the second author was partially supported by National Research Foundation of Korea (Grant No. 2013R1A1A2006037).

\section{Proof of Theorem \ref{mainthm}}
\subsection{Effective cone of the moduli space $\bM_d$}
\label{sec:wd}
As a birational model of the moduli space $\bM_d$, one can consider a projective bundle over a Kronecker modules space. Let $\bN:=N(3;d-1,d-2)$ be the parameter space of sheaf homomorphisms
$$
\cO_{\PP^2}(-2)^{\oplus d-1}\longrightarrow \cO_{\PP^2}(-1)^{\oplus d-2}
$$
acted on by the automorphism group $\text{GL}(d-1)\times \text{GL}(d-2)/\CC^*$.
Geometrically some open subset of the quotient space $\bN$ parameterizes $g=\frac{(d-1)(d-2)}{2}$ points of general position on $\PP^2$ and thus $\bN$ is birational with Hilbert scheme of $g$ points.
In terms of quiver, this is the moduli space of quiver representations of 3-Kronecker quiver
$$
\xymatrix{\bullet\ar@/^/[r] \ar@/_/[r]\ar[r]&\bullet\\}
$$
with dimension vector $(d-1, d-2)$.
\begin{defi}
 A representation of 3-Kronecker quiver with dimension vector $(e,f)$ is a pair of vector spaces $E$ and $F$ of dimension $e$ and $f$ respectively and three maps $\phi_1$, $\phi_2$, and $\phi_3$ from $E$ to $F$. A representation $(E,F,\phi_i)$ is called \emph{semistable} if there is no pair of subvector spaces $E'\subset E$ and $F'\subset F$ with dimensions $e'$ and $f'$ such that $\phi_i(E') \subset F'$ for all $i$ and
 $$ e'f-ef' >0 \text{  (or }\frac{e'}{f'}>\frac{e}{f}).  $$
\end{defi}
It is well known that the GIT-quotient $\bN$ is smooth and its Picard group is free of rank one.
Also $\bN$ carries a universal sheaf homomorphism
$$
\tau:p^*\cE \otimes q^*(\cO_{\PP^2}(-2))\lr p^*\cF \otimes q^*(\cO_{\PP^2}(-1))
$$
on $\bN\times \PP^2$ such that $p:\bN\times \PP^2\lr \bN$ and $q:\bN\times \PP^2\lr \PP^2$ are the projection onto its factors.
Let
$$
\cU:=p_*(\mbox{Coker}(\tau^*))
$$
be the push-forward sheaf on $\bN$. Since $h^0(\cU|_{\{n\}\times \PP^2})$ does not depend on $n \in \bN$, the sheaf $\cU$ is locally free of rank $3d$ over $\bN$ (cf. \cite[Proposition 3.2.1]{maican5}). Let $\bQ_d=\PP\cU$ be the projectivization of $\cU$. Then $\bQ_d$ is a birational model of the moduli space $\bM_d$. Following the idea of the proof of \cite[Proposition 1.1]{yuan2}, let
$$
\tau^*|_{n}:\cO_{\PP^2}(1)^{\oplus d-2}\lr \cO_{\PP^2}(2)^{\oplus d-1}
$$
be the dual homomorphism representing a point $n\in \bN$. For the general $n$, the cokernel $Q_n$ of $\tau^*|_{n}$ is torsion free and thus the cokernels of the homomorphism $ \cO_{\PP^2}\lr Q_n$ are stable sheaves. By taking the dual $\mathcal{E}xt^1(-,\cO_{\PP^2})$ for each sheaf, we obtain a stable sheaf in $\bM_d$. More strongly, we have
\begin{prop}\label{codim1}
The spaces $\bM_d$ and $\bQ_d$ are isomorphic with each other up to codimension two and thus $\mbox{Eff}(\bM_d)=\mbox{Eff}(\bQ_d)$.
\end{prop}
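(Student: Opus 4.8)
The plan is to upgrade the birational identification already produced above into an isomorphism away from codimension two, by constructing an explicit inverse to the map $\psi\colon \bQ_d\dashrightarrow\bM_d$ of the preceding paragraph and controlling where each map breaks down. Since $\bN$ is smooth with $\dim\bN=(d-1)(d-2)$ and $\bQ_d=\PP\cU$ is a projective bundle of fibre dimension $3d-1$, the space $\bQ_d$ is smooth projective of dimension $(d-1)(d-2)+3d-1=d^2+1=\dim\bM_d$, and both spaces are normal and locally factorial. For such a pair an isomorphism in codimension one identifies $\mbox{Pic}(\bQ_d)$ with $\mbox{Pic}(\bM_d)$ — restriction to a common big open subset is an isomorphism on Picard groups because the removed loci have codimension $\geq 2$ — and under this identification effective classes correspond by taking closures of strict transforms. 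Hence $\mbox{Eff}(\bM_d)=\mbox{Eff}(\bQ_d)$ drops out formally from the codimension-two statement, and the entire content lies in the geometric comparison.

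\emph{Forward map.} First I would turn the sheaf-theoretic recipe above into a morphism on a big open set. For general $n$ the cokernel $Q_n$ is torsion-free, and from $\mathrm{ch}(Q_n)=(1,d,\tfrac{3d}{2}-1)$ one computes $Q_n\cong\cI_Z(d)$ for a length-$g$ subscheme $Z$, so the torsion-free locus $\bN^{\circ}\subset\bN$ is open and dense. Over $\PP\cU|_{\bN^{\circ}}$ the tautological section $\cO_{\PP^2}\to Q$ has a cokernel $\cC$, and $\mathcal{E}xt^1_{p}(\cC,\cO_{\PP^2})$ is a flat family of one-dimensional sheaves with Hilbert polynomial $dm+1$; on the open locus where this family is fibrewise stable it induces $\psi\colon V\to\bM_d$ for the corresponding open $V\subset\bQ_d$. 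Thus I must control $\bQ_d\setminus V$, namely the torsion locus in $\bN$ together with the sublocus of sections $s$ for which $\mathcal{E}xt^1(\mathrm{coker}(s),\cO_{\PP^2})$ fails to be stable.

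\emph{Inverse map.} The reverse direction rests on the resolution
$$0\lr\cO_{\PP^2}(-2)^{\oplus d-1}\lr\cO_{\PP^2}(-1)^{\oplus d-2}\oplus\cO_{\PP^2}\lr F\lr 0,$$
obtained by dualizing the two-step presentation of $\mathrm{coker}(s)$, whose existence I would establish through the Beilinson spectral sequence: it holds precisely when the cohomology of $F$ forces the Beilinson monad into this shape, the representative conditions being $h^0(F)=1$ and $h^0(F(-1))=0$. Given such a resolution, the map onto $\cO_{\PP^2}$ recovers the section while the complementary block $\cO_{\PP^2}(-2)^{\oplus d-1}\to\cO_{\PP^2}(-1)^{\oplus d-2}$ recovers a stable Kronecker module, yielding a morphism $\phi\colon U\to\bQ_d$ on the open $U\subset\bM_d$ where the resolution exists; by construction $\phi$ and $\psi$ are mutually inverse wherever both are defined.

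\emph{Codimension estimates (the crux).} The hard part will be bounding $\bM_d\setminus U$ and $\bQ_d\setminus V$; the formal birational bookkeeping is routine once the maps are in place. On the $\bM_d$ side, the resolution can fail only along loci where the governing cohomology jumps, and the principal one is the Brill–Noether locus $\{h^0(F)\ge 2\}$, which by \cite[Proposition 4.2]{cc1.5} is not a divisor, i.e. has codimension $\geq 2$; I expect the auxiliary locus $\{h^0(F(-1))>0\}$ to be cut out by an analogous Brill–Noether-type condition of codimension $\geq 2$. On the $\bQ_d$ side I would show, via a general-position dimension count for $3$-Kronecker modules, that the torsion locus together with the non-stability locus does not sweep out a divisor after projectivization, so that $\bQ_d\setminus V$ is also of codimension $\geq 2$. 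Granting these two bounds, $\psi$ and $\phi$ restrict to inverse isomorphisms between big open subsets of $\bQ_d$ and $\bM_d$, which proves the proposition and hence the equality of effective cones.
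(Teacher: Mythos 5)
Your strategy---construct both maps explicitly via the Beilinson resolution
$$
\ses{\cO_{\PP^2}(-2)^{\oplus d-1}}{\cO_{\PP^2}(-1)^{\oplus d-2}\oplus\cO_{\PP^2}}{F}
$$
and then bound the loci where the two constructions break down---is sound in outline, and your formal reductions (the Picard-group bookkeeping giving $\mbox{Eff}(\bM_d)=\mbox{Eff}(\bQ_d)$ from the codimension-two statement, the dimension check $\dim\bQ_d=d^2+1$) are correct. But there is a genuine gap precisely at what you yourself call ``the crux'': neither codimension-two estimate is actually proved. You cite \cite[Proposition 4.2]{cc1.5} for $\{h^0(F)\geq 2\}$, but the bound for $\{h^0(F(-1))\neq 0\}$ is only ``expected'', and on the $\bQ_d$ side the ``general-position dimension count'' is announced, not performed. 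This is not a routine omission, because the relevant degeneracy locus is an honest \emph{divisor} in the neighboring birational models: $h^1(F)\neq 0$ occurs exactly when the $g$ points lie on a curve of degree $d-3$ (the mechanism of Lemma \ref{lem:sheafequi}), and the locus of such $Z$ has codimension one in the Hilbert scheme of $g$ points, hence sweeps out a divisor in the relative-Hilbert-scheme model. It becomes codimension $\geq 2$ only after passing to $\bM_d$ (where, as in Lemma \ref{lem:sheafequi}, positive-dimensional families of pairs $(C,Z)$ are contracted to the same sheaf) respectively to $\bN$ (a different birational model of the Hilbert scheme, in which this divisor is modified). A naive dimension count can therefore land on the wrong side, and your argument owes an explanation of why the contraction occurs. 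You also leave unchecked that the Kronecker block $\cO(-2)^{\oplus d-1}\to\cO(-1)^{\oplus d-2}$ extracted from the resolution is a \emph{semistable} module (otherwise it defines no point of $\bN$), which cuts out a further locus to be bounded.

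For comparison, the paper closes exactly this hole by citation rather than by computation: \cite[Proposition 1.1]{yuan2} exhibits $\bM_d$, away from a closed subset of codimension at least two, as a projective bundle over the explicit open subset $W$ of the Hilbert scheme of $g$ points consisting of subschemes in general position (equivalently, imposing independent conditions on curves of degree $d-3$), with the complement described explicitly; the paper then observes that $W$ is identified with an open subset of $\bN$ whose complement in $\bN$ has codimension at least two, and $\bQ_d=\PP\cU$ restricts to the same projective bundle over $W$. Your plan, carried to completion, would in effect reprove Yuan's proposition from scratch; that is viable (the resolution above under $h^1(F)=0$, $h^0(F(-1))=0$ is standard), but as written the proposal contains the statement of the decisive bounds, not their proof. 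Either supply the two estimates---the $\bN$-side one requires identifying which loci of the Hilbert scheme are contracted or flipped in passing to the Kronecker model---or do as the paper does and quote \cite[Proposition 1.1]{yuan2}.
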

\begin{proof}
As we mentioned in the introduction, the space $\bM_d$ is isomorphic to a projective bundle over an open subset of the Hilbert scheme of points up to codimension one. The complement of this open subset in the Hilbert scheme is explicitly described in \cite[Proposition 1.1]{yuan2}. It is easy to check that this open set is isomorphic to an open set in $\bN$ whose complement has codimension at least two. Hence we obtain the result.
\end{proof}

The Picard group of $\bN$ is generated by the divisor class $L_0$ consisting of $g$ points meeting a fixed line in $\PP^2$. Let us denote
$L'$ by the pull-back of $L_0$ along the projective bundle map $\bQ_d\lr \bN$. By Proposition \ref{codim1}, the divisor $L'$ can be naturally identified with the divisor $L$ on $\bM_d$. From now on, we will write $L'$ as $L$.
\begin{coro}\label{eff}
The divisors $A$ and $L$ are generators of the extremal rays of the effective cone of the moduli space $\bM_d$.
\end{coro}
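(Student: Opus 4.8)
The plan is to push the whole question onto the projective bundle $\bQ_d = \PP\cU \to \bN$, where the Picard group has rank two and is completely transparent. By Proposition \ref{codim1} we have $\mbox{Eff}(\bM_d) = \mbox{Eff}(\bQ_d)$, so it suffices to determine $\mbox{Eff}(\bQ_d)$. Write $\pi : \bQ_d \to \bN$ for the bundle projection and $\xi$ for the relative hyperplane class. Since $\bN$ has Picard rank one with generator $L_0$, we get $\mbox{Pic}(\bQ_d) = \ZZ\xi \oplus \ZZ\pi^*L_0$, and by construction $L = \pi^*L_0$. First I would record that $\{A,L\}$ is a basis: under the codimension-two identification $\mbox{Pic}(\bM_d) \cong \mbox{Pic}(\bQ_d)$, the class $A$ — the pull-back of the hyperplane under the Fitting map — restricts to the hyperplane class on every bundle fibre $\PP^{3d-1}$, so $A = \xi + \pi^*M$ for some $M \in \mbox{Pic}(\bN)$ and hence $\{A,L\}$ is a basis. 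As $A$ and $L$ are represented by honest effective divisors, the cone they span lies inside $\overline{\mbox{Eff}}(\bQ_d)$, and the real task is the reverse inclusion: every pseudo-effective class $aA+bL$ must have $a\ge 0$ and $b\ge 0$.

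The inequality $a\ge 0$ is the easy half. Take a line $\ell$ inside a fibre of $\pi$. Then $A\cdot\ell = 1$ while $L\cdot\ell = \pi^*L_0\cdot\ell = 0$, and the lines $\ell$ sweep out $\bQ_d$, so they form a moving curve class. Pairing with the pseudo-effective class gives $a = (aA+bL)\cdot\ell \ge 0$.

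The inequality $b\ge 0$ carries the real content, and it is the step I expect to be the main obstacle. The idea is to produce a moving curve class lying inside the fibres of the Fitting map $\bM_d \to |\cO_{\PP^2}(d)|$. Over the dense open locus of smooth curves $C$, this fibre is the Jacobian $J_C$, an abelian variety of dimension $g$. Two facts combine here: because $A$ is pulled back from $|\cO_{\PP^2}(d)|$ it restricts to $0$ on $J_C$, while by Lemma \ref{jac} the determinant divisor $D$ restricts to the theta divisor of $J_C$, which is ample. Since $\{A,L\}$ is a basis we may write $D = \alpha A + \beta L$ in $\mbox{Pic}(\bM_d)\otimes\QQ$, and restricting to $J_C$ gives $D|_{J_C} = \beta\,L|_{J_C}$. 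The left-hand side is ample and $L|_{J_C}$ is a nonzero effective class (the locus where the point configuration meets a fixed line is a nonempty proper divisor), which forces $\beta>0$ and hence makes $L|_{J_C}$ ample. Therefore every irreducible curve $R\subset J_C$ satisfies $A\cdot R = 0$ and $L\cdot R > 0$. Choosing such $R$ in a covering family — say intersections of ample divisors inside the $J_C$, spread out as $C$ varies over the smooth locus so that they cover $\bM_d$ — gives a moving curve with $A\cdot R=0$ and $L\cdot R>0$, whence $b\,(L\cdot R) = (aA+bL)\cdot R \ge 0$ and so $b\ge 0$.

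Together these inequalities give $\overline{\mbox{Eff}}(\bM_d) = \{\,aA+bL : a,b\ge 0\,\}$, a two-dimensional cone whose extremal rays are $\RR_{\ge 0}A$ and $\RR_{\ge 0}L$; since $A$ and $L$ are effective, they are precisely the extremal generators, which is the assertion of the corollary. The delicate points all sit in the third paragraph: verifying that $L$ restricts to an \emph{ample} rather than merely nonzero class on the Jacobian fibres (handled by the theta-divisor identification together with the fact that an effective class numerically proportional to theta on an abelian variety is automatically ample), and arranging the fibrewise curves $R$ into a genuine covering family, for which one restricts to the open locus of smooth plane curves whose total preimage is dense in $\bM_d$.
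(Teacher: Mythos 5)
Your proposal is correct, but it runs on a longer and partially different track than the paper's own proof, which consists of two assertions: $A$ spans an extremal ray because it is pulled back along the Fitting morphism $\bM_d\to|\cO_{\PP^2}(d)|$, and $L$ spans one because, after the identification $\mbox{Eff}(\bM_d)=\mbox{Eff}(\bQ_d)$ of Proposition \ref{codim1}, it is the pull-back of the Picard generator of $\bN$ under the projective bundle $\bQ_d\to\bN$. You share the reduction to $\bQ_d$ and the bundle structure, but then you actually compute the closed effective cone by duality against two covering families of curves: your lines $\ell$ in the $\pi$-fibres are exactly the pencils $P$ used later in Proposition \ref{dinterms} (with $A\cdot P=1$, $L\cdot P=0$), and they yield $a\ge 0$, which is the content of the paper's projective-bundle assertion for $L$; your curves $R$ in the Jacobian fibres of the Fitting map yield $b\ge 0$, which is the content of the paper's Fitting-map assertion for $A$. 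The genuinely new ingredient on your side is the positivity $L\cdot R>0$, which you obtain by importing Lemma \ref{jac} ($D|_{\mathrm{Jac}^g(C)}=d\Theta$) to conclude that $L$ restricts to a positive rational multiple of the theta divisor, hence is ample on the smooth Fitting fibres; this forward reference is harmless, since the proof of Lemma \ref{jac} is independent of the corollary, and the same positivity could be extracted more cheaply from the paper's test curve $T$ with $A\cdot T=0$, $L\cdot T=dg$ in Proposition \ref{dinterms}. What each route buys: the paper's proof is short but leaves the standard extremality mechanisms implicit, whereas yours pins down the full equality $\overline{\mbox{Eff}}(\bM_d)=\RR_{\ge 0}A+\RR_{\ge 0}L$ with every numerical input checked, at the modest extra cost of verifying that $A$ has degree one on fibre lines (confirmed by $P\cdot A=1$) and that the covering-family pairing inequality for pseudo-effective classes applies over the dense locus of smooth support curves, both of which you flag correctly as the delicate points.
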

\begin{proof}
By the construction, it is clear that the divisor $A$ is an extremal ray of the effective cone of the space $\bM_d$ since the Fitting map $\bM_d\lr |\cO(d)|$ is well-defined.

On the other hand, $\mbox{Eff}(\bM_d)=\mbox{Eff}(\bQ_d)$ by Proposition \ref{codim1}. But $L$ is one of extremal rays of the effective cone of $\bQ_d$ since $\bQ_d$ is a projective bundle over $\bN$. Thus we have the result.
\end{proof}
\begin{rema}
In the Bridgeland wall-crossing, an extremal ray of effective cone of $\bM_d$ corresponds to the collapsing wall at which destabilizing object is $\opt$.
Let
$$
\chi (E,F)=\int_{[\PP^2]} \mbox{ch}(E)\mbox{ch}(F) \mbox{Td}(\PP^2)
$$
be the Euler form for $E, F \in \mbox{Coh}(\PP^2)$.
We let $v$ be the class in $K(\PP^2)$ of sheaves in $\bM_d$ and $v'=\mbox{ch}(\opt)$. Then by the general theory of Bridgeland wall-crossing \cite{abch, bertram}, the divisor corresponding the wall is given by $\lambda(w)$ for  $w\in K(\PP^2)$ such that $\chi(w,v)=\chi(w,v')=0$.  By direct calculation, one can see $\lambda(w)=\lambda(-d\cO+\cO_l)+(d-1)A$, which is numerically equivalent to $L$ by Proposition \ref{dinterms}.
\end{rema}

\subsection{Determinant line bundles in terms of geometric divisors}
Let us recall the two divisors $A:=\lambda(\cO_{p})$, $D:=\lambda(-d\cO + \cO_l)$ for a line $l$ and a point $p$. These two divisors freely generate the Picard group of the moduli space $\bM_d$ (\cite{lepot1}).
Note that the divisor $A$ is the pulling-back divisor of the divisor $\cO(1)$ in the complete linear system $|\cO_{\PP^2}(d)|$ along the Fitting map $\bM_d\lr |\cO(d)|$. Recall that the divisor $L$ is the divisor class consisting of pairs of a degree $d$ curve $C$ and a subscheme $Z$ of length $g:=\frac{(d-1)(d-2)}{2}$ such that $Z$ passes through a fixed line. For later use,
we remark that a pair $(C,Z)$ corresponds to the sheaf $\cE xt^1(I_{Z,C}(d-3),\cO(-3))$. In this subsection, we express $D$ as a linear combination of $A$ and $L$. This is obtained by intersecting with test curves.
\begin{prop}\label{dinterms}
Under above definition and notations, there exists a numerical equivalence
$$D=(1-d)A+L.$$
\end{prop}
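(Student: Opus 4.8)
The plan is to exploit that $\mbox{Pic}(\bM_d)$ is free of rank two, so that $A$ and $L$ form a basis (they span two distinct extremal rays by Corollary \ref{eff}). Writing $D=\alpha A+\beta L$, I would pin down $\alpha,\beta$ by pairing both sides with two test curves $\gamma_1,\gamma_2$ whose classes are independent in $N_1(\bM_d)$. This turns the identity into a $2\times 2$ linear system in the six intersection numbers $A\cdot\gamma_i,\ L\cdot\gamma_i,\ D\cdot\gamma_i$; the assertion of the proposition is that these numbers force $\alpha=1-d$ and $\beta=1$.

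For the test curves I would take one-parameter families lying in the open locus where the identification $(C,Z)\leftrightarrow \cE xt^1(I_{Z,C}(d-3),\cO(-3))$ recalled above is valid. Let $\gamma_1$ be obtained by fixing a general length-$g$ subscheme $Z$ and letting the degree $d$ curve vary in a general pencil through $Z$: since $Z$ is fixed we have $L\cdot\gamma_1=0$, and since the Fitting map carries $\gamma_1$ isomorphically onto a line in $|\cO_{\PP^2}(d)|$ we have $A\cdot\gamma_1=1$. Let $\gamma_2$ be obtained by fixing a smooth curve $C$ together with $g-1$ general points of $C$ and letting the last point move over $C$: here the support curve is constant, so $A\cdot\gamma_2=0$, while the moving point makes $Z$ meet the fixed line exactly at the $d$ points of $C\cap l$, giving $L\cdot\gamma_2=d$. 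The intersection matrix against $(A,L)$ is then $\bigl(\begin{smallmatrix}1&0\\0&d\end{smallmatrix}\bigr)$, so $\alpha=D\cdot\gamma_1$ and $\beta d=D\cdot\gamma_2$; it remains to compute $D\cdot\gamma_1$ and $D\cdot\gamma_2$ and check they equal $1-d$ and $d$.

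These last two numbers I would compute by Grothendieck--Riemann--Roch. Over each $\gamma_i$ the universal sheaf $\cF$ is the push-forward to $\PP^2\times\gamma_i$ of a line bundle on the total space of the universal curve restricting to $\cO_C(Z)=\cE xt^1(I_{Z,C}(d-3),\cO(-3))$ on each member. Applying GRR to the projection $p:\PP^2\times\gamma_i\lr\gamma_i$, whose relative tangent bundle is $q^*T_{\PP^2}$, gives
$$
D\cdot\gamma_i=\int_{\PP^2\times\gamma_i}\mbox{ch}(\cF)\,q^*\!\big(\mbox{ch}(-d\cO_{\PP^2}+\cO_l)\,\mbox{Td}(\PP^2)\big),
$$
where a direct expansion yields
$$
\mbox{ch}(-d\cO_{\PP^2}+\cO_l)\,\mbox{Td}(\PP^2)=-d+\Big(1-\tfrac{3d}{2}\Big)H+(1-d)H^2 .
$$
One then computes $\mbox{ch}(\cF)$ by GRR for the closed immersion of the universal curve, tracking the Todd correction of its normal bundle (which is $\cO_C(d)$, of degree $d^2$, on a member) and the twist by $Z$; for $\gamma_1$ this involves the class of the $g$ sections on the pencil surface $\cC$, and for $\gamma_2$ the diagonal on $C\times C$.

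The main obstacle is precisely this last step: the Riemann--Roch bookkeeping for a torsion universal sheaf supported on the universal curve, where $\mbox{ch}_2(\cF)$ mixes the normal-bundle contribution, the twist by the fixed or moving points, and the self-intersection of the support on the relevant surface. A secondary point requiring care is that $\gamma_1$ and $\gamma_2$ are honest curves in $\bM_d$ meeting the three divisors as claimed — that is, that both families avoid the codimension-two locus of Proposition \ref{codim1} — and that the count $L\cdot\gamma_2=d$ is transverse, with no hidden multiplicity along $C\cap l$.
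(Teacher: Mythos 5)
Your strategy is the paper's own (the same two test curves and the same determinant-bundle GRR computations), but there is a genuine error at precisely the step you wave off as a transversality check: the intersection number is $L\cdot\gamma_2 = dg$, not $d$. The representation of a sheaf $F\in T$ as an extension $0\to \cO_C\to F\to \cO_Z\to 0$ is unique only when $h^0(F)=1$. Since $h^0(\cO_{\PP^2}(d-3))=g$, the fixed $g-1$ general points $Z'$ lie on a \emph{unique} degree $d-3$ curve $E$, which meets $C$ in $2g-2$ points; for each of the $g-1$ positions of the moving point $p$ in $E\cap C\setminus Z'$, the sheaf $F=\cO_C(Z'+p)$ has $h^0(F)=2$, so it admits a whole pencil of such representations: $F\cong I_{W,C}(d-3)$ with $W$ the residual $g-2$ points, and the zero schemes of its sections are $E'\cap C\setminus W$ as $E'$ runs over the pencil of degree $d-3$ curves through $W$. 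For each of the $d$ points $q\in C\cap l$ some member of this pencil passes through $q$, so each of these $g-1$ sheaves lies on $L$ with multiplicity $d$. This is exactly the content of the paper's Lemma \ref{lem:sheafequi} and the resulting count $L\cdot T = d + d(g-1)=dg$; your proposal omits this analysis entirely, and it is the crux of the argument rather than a secondary point.

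The error is fatal to your linear algebra, not merely to a detail: your GRR step, carried out correctly (the paper does it with the family $j_*\cO_{C\times C}((g-1)p_2+\Delta)$ on $C\times\PP^2$, and cross-checks it via $D|_{\mathrm{Jac}^g(C)}=d\Theta$ and $\Theta\cdot T=g$), gives $D\cdot\gamma_2 = dg$. With your claimed intersection matrix $\diag(1,d)$ you would then conclude $\beta = g$, i.e. $D=(1-d)A+gL$, contradicting the proposition for every $d\geq 4$. The first column of your computation is fine ($A\cdot\gamma_1=1$, $L\cdot\gamma_1=0$, and $D\cdot\gamma_1=1-d$ by the pencil computation, which matches the paper's blow-up family $\cF=\mathcal{E}xt^1(I_{Z,S}(d-3),\cO(-3))$ with $\mathrm{ch}(\cF)=e^p-e^{-dh}+gh^2$). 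Once you supply the missing sheaf-theoretic input and replace $L\cdot\gamma_2=d$ by $dg$, the system gives $\alpha=1-d$, $\beta=1$, and the proof closes exactly as in the paper.
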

\begin{proof}
Choose very general $g$ points on $\PP^2$. Let $P\simeq \PP^1$ be a pencil of degree $d$ curves with fixed $g$ base points. By identifying an element in $P$ with a stable sheaf, we may consider $P$ as a curve in $\bM_d$. Then, clearly we have
$$
P\cdot A=1,\hspace{1em} P\cdot L=0.
$$
On the other hand, let $S$ be the blow-up $\PP^2$ along $d^2$ base points where the $d^2$ points are the intersection points of two curves of degree $d$.
Then $S$ is given by a hypersurface of bi-degree $(1,d)$ in $P\times \PP^2\simeq \PP^1\times \PP^2$. Let $E_i$ be the exceptional divisors on $S$ for $1\le i\le d^2$.
Hence
$$
S\hookrightarrow P\times \PP^2.
$$
Let $\bigcup_{i=1}^{g} E_i=Z$.
From the structure sequence,
$$
\ses{I_{Z,S}}{\cO_S}{\cO_Z}
$$
By taking $\mathcal{E}xt^1(-, \cO(-3))$ followed by twisting $\opt(d-3)$, we obtain
$$
\ses{\mathcal{E}xt^1(\cO_S(d-3), \cO(-3))}{\mathcal{E}xt^1(I_{Z,S}(d-3), \cO(-3))}{\mathcal{E}xt^2(\cO_Z(d-3), \cO(-3))}.
$$
Let $\cF:=\mathcal{E}xt^1(I_{Z,S}(d-3), \cO(-3)))$ be the extension class. Since $E_i$ is of general position by construction, the sheaf $\cF$ is a flat family of stable sheaves with Hilbert polynomial $dm+1$ parameterized by $\PP^1$ (\cite[Lemma 12]{maicandual}).
To compute the Chern character $\mbox{ch}(\cF)$, let us find the free resolution of $\mathcal{E}xt^1(\cO_S(d-3), \cO(-3)))$. From the resolution of $\cO_S$ we obtain
$$
\ses{\cO(0,-d)}{\cO(1,0)}{\mathcal{E}xt^1(\cO_S(d-3), \cO(-3)))}.
$$
Thus,
$$
\mbox{ch}(\cF)=e^{p}-e^{-dh}+gh^2.
$$
Here, $p$ is the point class in the Chow group $A^*(P)$ and $h$ is the line class in the Chow group $A^*(\PP^2)$.
Also,
$$
\mbox{Td}(P\times \PP^2/P)\cdot \mbox{ch}(-d+\cO_l)=(1+\frac{3}{2}h+h^2)\cdot (-d+1-e^{-h})=-d+(1-\frac{3}{2}d)h+(1-d)h^2.
$$
Hence, by the Riemann-Roch Theorem and the base change property of the determinant line bundles, we obtain
\begin{align*}
D\cdot P
&= \mbox{Coeffi}_{ph^2}[\mbox{ch}(\cF)\mbox{Td}(P\times \PP^2/P)\mbox{ch}(-d+\cO_L)])\\
&= \mbox{Coeffi}_{ph^2}[(e^p)(-d+(1-\frac{3}{2}d)h+(1-d)h^2)])\\
&= 1-d.
\end{align*}

Now let us compute the intersection numbers with another test curve. Fix a general smooth degree $d$ curve $C$ in $\PP^2$. Let $T\cong C$ be the curve in $\bM_d$ by varying one point on $C$ while fixing general $g-1$ points. Clearly we have
$$
A\cdot T=0.
$$
To compute $D\cdot T$, we construct a family of stable sheaves as follow. Let
$$
j: C\times C \lr C \times \PP^2
$$
be the natural inclusion. Let $p_1=\{pt\}\times C$, $p_1= C\times \{pt\}$ and $p=\{pt\}\times \PP^2$. Let $\Delta \subset C \times C$ be the diagonal. Then,
$$
j_*1=dh, \hspace{1em} j_*p_1=dph,\hspace{1em}  j_*p_2 =h^2,\hspace{1em}  j_*\Delta=dph+h^2.
$$
Let $F=j_*\cO_{C\times C}((g-1)p_2+\Delta)$. Then $F$ fits into the short exact sequence $$\ses{j_*\cO_{C\times C}}{F}{j_*\cO_Z}$$ where $Z= \Delta \cup C\times \{(g-1) \mbox{pts}.\} \subset C\times C$.
Now, $$j^*\frac{\mbox{Td} C}{\mbox{Td} \PP^2}=j^*(\frac{1+(1-g)p_2}{1+\frac{3}{2}h+h^2})=\frac{1+(1-g)p_2}{1+\frac{3}{2}dp_2}=1+(1-g-\frac{3}{2}d)p_2.$$
Hence,
$$
\mbox{ch} j_*\cO_{C\times C}=j_*(\mbox{ch}(1\cdot j^*\frac{\mbox{Td} C}{\mbox{Td} \PP^2}))= dh-\frac{1}{2}d^2h^2,
$$
$$
\mbox{ch} j_*\cO_Z = j_*((\Delta +(g-1)p_2)(1+(1-g-\frac{3}{2}d)p_2))=dph+gh^2 +(1-g-\frac{3}{2}d)ph^2.
$$
That is,
$$
\mbox{ch}( F)= dh+(dph +(g-\frac{1}{2}d^2)h^2)+(1-g-\frac{3}{2}d)ph^2.
$$
Hence,
$$
D\cdot T= \mbox{Coeffi}_{ph^2}[\mbox{ch}(F)\mbox{Td}(T\times \PP^2/T)\mbox{ch}(-d+\cO_l)])=dg.
$$
Finally, we will see $L\cdot T=dg$ by describing the sheaves in $T$ geometrically as follows. We start with an example where $d=4$.

\begin{exam} \label{ex:deg4}
Let $C$ be a smooth quartic curve. A sheaf $F$ in $\bM_4$ whose support is $C$ is $\cO_C(p_1+p_2+p_3)$ where $p_1,p_2$, and $p_3$ are points in $C$. By Serre duality, we have $H^1(F)\simeq Hom(F, \cO_C(1))$. Hence $h^1(F)>0$ if and only if $p_1,p_2$, and $p_3$ are collinear.  If $p_1,p_2$, and $p_3$ are not collinear, we have a unique global section of $F$ whose zeros are these three points. Hence the sheaf $F$ is completely determined by $p_1,p_2$, and $p_3$. Suppose now that $p_1,p_2$, and $p_3$ are collinear and let $\ell$ be the line containing $p_1,p_2$, and $p_3$. If we let $p$ be the fourth point in $C\cap \ell$, it is easy to see that $F=I_{p,C}(1)$. Therefore, $F$ is determined by $p$.

With this description, it is easy to see that $L\cdot T=12$ when $d=4$.
\end{exam}

\begin{lemm}\label{lem:sheafequi}
  Let $C$ be a smooth plane curve of degree $d$ and $g$ be its genus. Fix general $g-1$ points $Z'$ on $C$ and let $p$ be a varying point on $C$. Denote $Z=Z'\cup \{p\} $. 
  Let $E$ be the unique curve of degree $d-3$ passing through fixed $g-1$ points. Then, we have the following.
\begin{enumerate}
\item If $p$ does not lie on $E$, then the pair $(C,Z)$ uniquely determines the sheaf $F$ in $\bM_d$ by the extension
    \[\ses{\cO_C}{F}{\cO_Z}.\]
\item If $p$ lies on $E$, then the corresponding sheaf $F$ is isomorphic to the ideal sheaf of ${E\cap C\setminus Z}$ in $C$ twisted by $(d-3)$. Moreover, $F$ is uniquely determined by ${E\cap C\setminus Z}$, that is, if we write $F$ as \[\ses{\cO_C}{F}{\cO_{Z'}},\] then there is another degree $d-3$ curve $E'$ such that ${E'\cap C\setminus Z'}={E\cap C\setminus Z}$.
\end{enumerate}

\end{lemm}

\begin{proof}
The proof is similar to Example \ref{ex:deg4}. We first note that $Z$ lies on a degree $d-3$ curve $E$ if and only if there is a nonzero map $F\to \cO_C(d-3)$. Indeed, from the exact sequence
\[\ses{\cO_C}{\cO_C(d-3)}{\cO_{E\cap C}},\]
the inclusion $\cO_Z\to \cO_{E\cap C}$ induces a map $F\to \cO_C(d-3)$ which factors through the inclusion $\cO_C\to \cO_C(d-3)$.

 Since $g-1 = \frac{d(d-3)}{2}$, $Z'$ determines a unique degree $d-3$ curve $E$. If the moving point $p$ does not lie on $E$, then the corresponding sheaf $F$ has $Hom(F, \cO_C(d-3))\simeq H^1(F)=0$ and hence $h^0(F)=1$. So, $Z'$ determines a unique sheaf.

Suppose now that $p$ lies on $E$. Then we have a nonzero map $i\colon F\to \cO_C(d-3)$, which is necessarily injective as $C$ is smooth. So $F$ fits into an exact sequence
\begin{equation}\label{eq:ibcz}\ses{F}{\cO_C(d-3)}{\cO_{E\cap C\setminus Z}}.\end{equation}
Therefore $F$ is the ideal sheaf of ${B\cap C\setminus Z}$ in $C$ twisted by $(d-3)$.

To show the converse, it is enough to show that $F$ in \eqref{eq:ibcz} has $h^1(F)=1$, because then any map $F\to \cO_C(d-3)$ is a constant multiple of the map in \eqref{eq:ibcz} and hence ${E\cap C\setminus Z}$ uniquely determines $F$. From \eqref{eq:ibcz}, we see that $h^0(\cO_C(d-3))=g$ and $\cO_{E\cap C\setminus Z}=g-2$. Since $Z'$ is chosen sufficiently general and $\cO_C(d-3)$ is generated by its global sections, we conclude that $h^0(F)=2$ or equivalently $h^1(F)=1$.
\end{proof}

Now we compute the intersection number $L\cdot T$. Fix a general line $l$. A sheaf $F$ is in $L\cap T$ if $F$ can be written as
\[\ses{\cO_C}{F}{\cO_Z}\]
where $Z$ intersects $l$. By Lemma \ref{lem:sheafequi}, there are two possibilities: the varying points $p$ can either be away from $E$ or on $E$. For $p$ away from $E$, $F$ is in $L\cap T$ if and only if $p$ is one of $d$ points in $C\cap l$. Consider the case where $p$ is on $E$. Note that $E\cap C$ consists of $d(d-3)=2g-2$ points. Since $g-1$ of them are fixed, the remaining $g-1$ points are what $p$ can be. For each of these possibilities, the remaining $(2g-2)-g=g-2$ points uniquely determine a sheaf $F$. But these sheaves must be counted with multiplicity. If we take any point on $C\cap l$, this point together with fixed $g-1$ points give a degree $d-3$ curve $E'$, which by Lemma \ref{lem:sheafequi} gives the same sheaf $F$. Hence the multiplicity is $d$ as there are $d$ possible choices in $C\cap l$. Thus we conclude that
\[L\cdot T = d + d(g-1) = dg.\]

This proves Proposition \ref{dinterms}.
\end{proof}
\medskip

Finally, we give an alternative geometric proof of the intersection number $D\cdot T$. Let $\mbox{Jac}^g(C)$ be the moduli space of line bundles of degree $g$ on a smooth projective curve $C$ with $g(C)=g$. For a fixed $\mathcal{L}\in \mbox{Jac}^{-1}(C) $, let
$$
\Theta:=\{M\in \mbox{Jac}^g(C)| h^0(M\otimes \mathcal{L})\neq 0 \}
$$
be the \emph{theta} divisor on $\mbox{Jac}^g(C)$ with respect to $\mathcal{L}$.
Now we fix a smooth plane curve $C$. Then clearly we have $\mbox{Jac}^g(C)\subset \bM_d$. Let $\mathcal{L}=\cO_C(-p)$ for a $p\in C$. The divisor $\Theta$ on $\mbox{Jac}^g(C)$ parameterizes the divisors on $C$ consisting of $g-1$ freely varying points and the fixed point $p$.
Furthermore,
\begin{lemm}\label{jac}
The restriction of the determinant bundle $D$ to $\mbox{Jac}^g(C)$ is $$ D|_{\mbox{Jac}^g(C)}=d\Theta.$$
\end{lemm}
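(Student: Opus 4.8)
The plan is to compute the restriction directly from the definition of the determinant line bundle together with the Grothendieck--Riemann--Roch theorem, in the same spirit as the computation of $D\cdot T$ carried out above. Fix the smooth plane curve $C$ and write $J=\mbox{Jac}^g(C)$, with $\iota\colon C\hookrightarrow \PP^2$ the inclusion. Under the embedding $J\hookrightarrow \bM_d$ a point $M$ is sent to the pushforward $\iota_*M$ of a degree $g$ line bundle, so the restriction of a universal sheaf $\cF$ to $\PP^2\times J$ is $\iota_*\mathcal{P}$ for a Poincar\'e line bundle $\mathcal{P}$ on $C\times J$. Using the base change property of the determinant line bundle already invoked in Proposition \ref{dinterms}, together with the projection formula, the computation reduces to
$$D|_J=\det (\pi_J)_{!}\big(\mathcal{P}\otimes \pi_C^* L\iota^*(-d\cO+\cO_l)\big),$$
where $\pi_C,\pi_J$ are the two projections from $C\times J$.

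First I would compute the derived restriction $L\iota^*(-d\cO+\cO_l)$ to $C$. A general line $l$ meets $C$ transversally in the degree $d$ divisor $D_0:=C\cap l$, so the intersection is proper, there is no higher $\mbox{Tor}$, and $L\iota^*\cO_l=\cO_{D_0}$, $L\iota^*\cO=\cO_C$. Hence in $K(C)$
$$L\iota^*(-d\cO+\cO_l)=-d\cO_C+\cO_{D_0}=(1-d)\cO_C-\cO_C(-D_0).$$
I would record that this class is Euler-characteristic orthogonal to every degree $g$ line bundle on $C$, matching $\chi(w\otimes F)=0$ for $F\in\bM_d$; this orthogonality is exactly what makes $D|_J$ independent of the normalization of $\mathcal{P}$, so that I may work with any convenient Poincar\'e bundle.

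Next I would run Grothendieck--Riemann--Roch for $\pi_J\colon C\times J\to J$, whose relative tangent bundle gives $\mbox{Td}(T_{\pi_J})=1+(1-g)f$ with $f=\pi_C^*[\mbox{pt}]$. Writing $c_1(\mathcal{P})=gf+\gamma$ with $\gamma\in H^1(C)\otimes H^1(J)$, the standard Jacobian relations $f^2=0$, $f\gamma=0$, and $\gamma^2=-2f\theta$ (with $\theta=\pi_J^*[\Theta]$) give $\mbox{ch}(\mathcal{P})=1+gf+\gamma-f\theta$. Since $e^{-df}=1-df$, the bracket $(1-d)-e^{-df}$ equals $-d(1-f)$, so
$$\mbox{ch}\big(\mathcal{P}\otimes \pi_C^*L\iota^*(-d\cO+\cO_l)\big)=-d\,\mbox{ch}(\mathcal{P})(1-f)=-d\big(1+(g-1)f+\gamma-f\theta\big).$$
Multiplying by $1+(1-g)f$ collapses the $f$-term and leaves $-d(1+\gamma-f\theta)$; pushing forward along $\pi_J$ annihilates $1$ and $\gamma$ while sending $f\theta\mapsto [\Theta]$. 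Therefore $\mbox{ch}_1((\pi_J)_!(\cdots))=d[\Theta]$, that is, $D|_J=d\Theta$.

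The main obstacle is to pin down the two inputs precisely: the derived restriction $L\iota^*\cO_l=\cO_{D_0}$, which is where the transverse section $C\cap l$ (hence the eventual factor $d$) enters, and the intersection relation $\gamma^2=-2f\theta$ for the Poincar\'e bundle, which is what converts $c_1(\mathcal{P})^2$ into the theta class with the correct sign. As a consistency check one can specialize the same GRR computation to $\mbox{Pic}^{g-1}(C)$ and recover $\det R(\pi_J)_*\mathcal{P}=\cO(-\Theta)$, confirming the normalization. Everything else is bookkeeping of Chern characters of exactly the kind appearing in the $D\cdot T$ computation, and the lemma then immediately reproves $D\cdot T=d\,(\Theta\cdot T)=dg$.
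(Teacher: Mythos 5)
Your proof is correct, but after its first step it takes a genuinely different route from the paper's. Both arguments begin the same way: restrict the class $-d\cO+\cO_l$ to $C$, using transversality of $l$ and $C$ (so no higher Tor) to get $-d\cO_C+\cO_{C\cap l}=-d[\cO_C(-p)]$ in $K(C)$. At that point the paper simply quotes Le Potier: by the example following \cite[Proposition 2.10]{lepot1}, the determinant bundle associated to $-\cO_C(-p)$ on $\mbox{Jac}^g(C)$ has a section whose zero scheme is exactly $\Theta$, giving $D|_{\mbox{Jac}^g(C)}=d\Theta$ at once. You instead compute $c_1$ of the determinant bundle by Grothendieck--Riemann--Roch along $C\times J\to J$ using the standard Poincar\'e-bundle relations $f^2=0$, $f\gamma=0$, $\gamma^2=-2f\theta$; your bookkeeping is right (the restricted class has Chern character $-d(1-f)$, and pushing forward $-d(1+\gamma-f\theta)$ yields $d[\Theta]$), and your remark that the orthogonality $\chi(M\otimes w)=0$ for $\deg M=g$ makes the answer independent of the normalization of $\cP$ is precisely what licenses taking $c_1(\cP)=gf+\gamma$ with no class pulled back from $J$. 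The trade-off: your GRR route is self-contained and uniform with the $D\cdot P$ and $D\cdot T$ computations of Proposition \ref{dinterms}, but it only determines $c_1(D|_J)$ in $H^2(J,\ZZ)$, i.e.\ the equality up to numerical (homological) equivalence, whereas Le Potier's section argument identifies the actual divisor of a section, hence the linear equivalence class, including the specific translate of $\Theta$ cut out by $\cL=\cO_C(-p)$. Since the paper uses the lemma only for the intersection number $D\cdot T=d\,\Theta\cdot T=dg$, the numerical statement you prove is fully sufficient for its purpose.
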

\begin{proof}
The restriction of the class $-d\cO +\cO_{l}$ to $C$ is $-d\cO_C +d \cO_{p} = -d \cO_C(-p)$. By the same argument as in the example after \cite[Proposition 2.10]{lepot1}, the zero scheme of a section of $\cL(-\cO_C(-p))$ is exactly $\Theta$ defined above. Hence we have $D|_{\mbox{Jac}^g(C)}=d\Theta.$
\end{proof}

Also by definition, the curve $T$ is numerically equivalent to $\frac{1}{(g-1)!}\Theta^{g-1}$ (\cite[Lemma 1]{kou}). Hence we have
$$
D\cdot T=d\Theta \cdot\frac{1}{(g-1)!}\Theta^{g-1}=dg
$$
because $\Theta^{g}=g!$.
\begin{rema}
By using Lemma \ref{lem:sheafequi}, one can compute the intersection number $\Theta\cdot T$ as follows.
  First of all, there is one straightforward element in the intersection, namely, the one where $g$ points are $g-1$ fixed points defining $T$ together with the point $p$ defining $\Theta$. Consider now the elements in $T$ where $g$ points lie on some degree $d-3$ curve $E$. By Lemma \ref{lem:sheafequi}, corresponding sheaf is completely determined by the remaining $g-2$ intersection points of $C$ and $E$. If we take another degree $d-3$ curve $E'$ passing through these $g-2$ points and $p$, since we can take $g-1$ free points defining an element in $\Theta$ to be the remaining $g-1$ intersection points of $C$ and $E'$, the corresponding sheaf is also in $\Theta$. There are $g-1$ possibilities of this case. Hence we see that $\Theta\cdot T=g$.
\end{rema}

\subsection{Nef cone of the space $\bM_d$}
In this section, we determine the nef cone of $\bM_d$ by using the Bridgeland wall-crossing.
We prove that the first wall-crossing locus contains some relative Hilbert scheme of points (\cite{cc1}). We then find a divisor class which is zero on this wall-crossing locus, which complete the proof of Theorem \ref{mainthm}. At the end of this section, we recheck our computation by the technique of Bridgeland wall-crossing.

To determine the net divisor, let us describe the stable sheaves contained in the first wall-crossing center of the moduli space $\bM_d$ (\cite{woolf}). By \cite[Proposition 7.5]{woolf}, we know that the destabilizing objects corresponding to the first wall are
$$
\left\{
  \begin{array}{ll}
I_{Z,\PP^2}(\frac{d-2}{2}) \text{ with }|Z|=\frac{d-2}{2}, & \text{if } d \text{ is even;} \\
\cO(\frac{d-3}{2}), & \text{if } d \text{ is odd}.
  \end{array}
\right.
$$

\begin{lemm}\label{wall}
If $d$ is even, the sheaves of the form $I_{Z,C}(\frac{d-2}{2})$ where $C$ is a degree $d$ curve and $Z$ is a length $\frac{d-2}{2}$ subscheme of $C$ are contained in the first wall-crossing center. If $d$ is odd, the sheaves of the form $\cE xt^1(I_{p,C}, \cO(-3))$ where $C$ is a degree $d$ curve and $p\in C$ is a point are contained in the first wall-crossing center.
\end{lemm}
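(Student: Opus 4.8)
The plan is to realize each claimed sheaf $F$ as an extension in which the destabilizing object singled out in \cite[Proposition 7.5]{woolf} occurs as a genuine sub-object of $F$ of the \emph{same} Bridgeland phase on the first wall. Once such a sub-object is produced, $F$ is strictly semistable exactly on that wall, and hence lies in the first wall-crossing center. The mechanism is uniform: I would write down an explicit short exact sequence in $\mathrm{Coh}(\pt)$ relating $F$ to the Woolf destabilizer and an auxiliary line bundle, and then reinterpret it in the tilted heart $\cA_s$ at the wall, where the line bundle is shifted by $[1]$ and the destabilizer becomes an honest subobject.

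For the even case I start from the ideal-sheaf sequence $\ses{\cO(-d)}{I_{Z,\pt}}{I_{Z,C}}$, where $C$ is a degree $d$ curve through $Z$ (a section of $I_{Z,\pt}(d)$), and twist by $\cO(\tfrac{d-2}{2})$ to get
$$\ses{\cO(-\tfrac{d+2}{2})}{I_{Z,\pt}(\tfrac{d-2}{2})}{I_{Z,C}(\tfrac{d-2}{2})}.$$
A Chern character computation gives $\chi(I_{Z,C}(\tfrac{d-2}{2}))=1$ when $|Z|=\tfrac{d-2}{2}$, so $F=I_{Z,C}(\tfrac{d-2}{2})$ is a point of $\bM_d$ (it is rank one and torsion free on the integral curve $C$), while the middle term is precisely the destabilizer $E=I_{Z,\pt}(\tfrac{d-2}{2})$. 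For $-\tfrac{d+2}{2}\le s<\tfrac{d-2}{2}$ the kernel line bundle lies in the torsion-free part of the tilt (so it contributes $\cO(-\tfrac{d+2}{2})[1]$) while $E$ and $F$ remain honest sheaves in the torsion part; rotating the triangle, the sequence becomes the heart short exact sequence $\ses{E}{F}{\cO(-\tfrac{d+2}{2})[1]}$, exhibiting $E$ as a sub-object of $F$.

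For the odd case I would run the dual construction through the involution $G\mapsto \cE xt^1(G,\cO(-3))$. Dualizing $\ses{I_{p,C}}{\cO_C}{\cO_p}$ yields $\ses{\omega_C}{F}{\cO_p}$ with $\omega_C=\cO_C(d-3)$ and $F=\cE xt^1(I_{p,C},\cO(-3))$. Here $\chi(F)=g$, and since $g\equiv 1 \pmod d$ for odd $d$, twisting by $\cO(-\tfrac{d-3}{2})$ identifies $F$ with a point of $\bM_d$ and carries the destabilizer $\cO(\tfrac{d-3}{2})$ to $\oo$. Concretely, the composite $\cO(\tfrac{d-3}{2})\twoheadrightarrow \cO_C(\tfrac{d-3}{2})\hookrightarrow F$ has sheaf kernel $\cO(-\tfrac{d+3}{2})$ and cokernel a length $g$ scheme, so in $\cA_s$ it is a monomorphism realizing $\cO(\tfrac{d-3}{2})$ as a sub-object of $F$, with cokernel an extension of $g$ points by $\cO(-\tfrac{d+3}{2})[1]$.

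In both cases the concluding step is to invoke \cite[Proposition 7.5]{woolf}: the class of the sub-object just produced is exactly the class whose phase meets that of $v=\mathrm{ch}(F)$ on the first wall, so $F$ is strictly semistable there and therefore belongs to the first wall-crossing center. The step I expect to be the main obstacle is the t-structure bookkeeping: checking that at the wall's value of $s$ the auxiliary line bundle really lies in the torsion-free part of the tilt (hence shifts) while the destabilizer and $F$ lie in the torsion part, and that the two phases coincide \emph{precisely} on Woolf's first wall rather than merely numerically. For the odd case there is the additional subtlety of matching the duality with the twist $\bM(d,1)\cong\bM(d,g)$ so that $\cO(\tfrac{d-3}{2})$ appears as the phase-matching sub-object and not as a quotient.
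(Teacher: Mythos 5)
Your even-degree argument is, up to the tilted-heart bookkeeping you make explicit, the same as the paper's: the paper twists $\ses{I_{C,\PP^2}}{I_{Z,\PP^2}}{I_{Z,C}}$ by $\cO(\frac{d}{2}-1)$ to get exactly your sequence $\ses{\cO(-\frac{d+2}{2})}{I_{Z,\PP^2}(\frac{d-2}{2})}{F}$ and reads off the destabilizer $I_{Z,\PP^2}(\frac{d-2}{2})$ from \cite[Proposition 7.5]{woolf}. The one substantive divergence there is stability: the paper proves $I_{Z,C}$ is stable for \emph{every} degree $d$ curve $C$, by dualizing to an $\infty$-stable pair with Hilbert polynomial $dm+\frac{-d^2+4d-2}{2}$, checking there are no pair walls (cf.\ \cite[Proposition 4.9]{cc1.5}) so that the pair is $0^+$-stable, and then applying the duality of \cite{maicandual}; your shortcut ``rank one torsion free on the integral curve $C$'' covers only integral $C$. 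Since the lemma is consumed by the intersection computation with a pencil $P$ of degree $d$ curves (where every member of the pencil must define a point of $\bM_d$ for the family $I_{Z,S}(\frac{d-2}{2})$ to map to the moduli space), you would need either the paper's stronger stability statement or an argument that a general pencil avoids non-integral members.

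The odd case contains a genuine error in the twist normalization, and as written it lands on the wrong wall. From $\ses{\cO_C(d-3)}{F}{\CC_p}$ one has $\chi(F)=g$, and the phase-matching sub-line-bundle of the \emph{untwisted} $F$ is $\cO(d-3)$ (composite $\cO(d-3)\twoheadrightarrow\cO_C(d-3)\hookrightarrow F$, cokernel $\CC_p$ of length one); twisting by $\cO(-\frac{d-3}{2})$ to land in $\bM(d,1)$ carries this sub-object to $\cO(\frac{d-3}{2})$, which is Woolf's first-wall destabilizer and recovers the paper's sequence $0\lr\cO(\frac{-d-3}{2})\lr\cO(\frac{d-3}{2})\lr F\lr\CC_p\lr 0$. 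You instead exhibit $\cO(\frac{d-3}{2})$ as a sub-object of the untwisted $F$ with cokernel of length $g$, and assert that the twist carries the destabilizer ``to $\cO$.'' In the $\chi=1$ normalization the class pair $\bigl(\cO,\,(0,d,1)\bigr)$ is the \emph{collapsing} wall $W_0$ --- the wall computing the effective cone, with destabilizing object $\opt$, as in the paper's remark after Corollary \ref{eff} --- not the first wall computing the nef cone. So the phases of your exhibited sub-object and of $F$ do not coincide on the first wall, the invocation of \cite[Proposition 7.5]{woolf} fails for that pair, and the resulting divisor would be $L$ rather than $B$. The fix is exactly the normalization above: work with $F'=\cE xt^1(I_{p,C}(\frac{d-3}{2}),\cO(-3))\in\bM(d,1)$, sub-object $\cO(\frac{d-3}{2})$, and quotient in the tilted heart an extension of $\CC_p$ by $\cO(-\frac{d+3}{2})[1]$ --- cokernel of length one, not $g$.
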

\begin{proof}

Let $F= I_{Z,C}(\frac{d-2}{2})$. We first show that $F$ is stable. It is enough to show that $I_{Z,C}$ is stable. By taking the dual $\cE xt^1(-,\cO(-3))$ of the short exact sequence $\ses{I_{Z,C}}{\cO_C}{\cO_Z}$ and twisting, we obtain the pair $s: \cO_C \lr \cE xt^1(I_{Z,C},\cO(-3))\otimes \cO(3-d)$ whose cokenel is zero dimensional. Hence this pair can be regarded as an $\infty$-stable pair with Hilbert polynomial $dm+\frac{-d^2+4d-2}{2}$. An $\infty$-stable pair is a pair with zero dimensional cokernel and a $0^+$-stable pair is a stable sheaf with a choice of a nonzero section. Their moduli spaces are related by wall-crossing. For definitions and more details, see \cite{cc1}. By numerically computing the walls of the moduli space of stable pairs with this Hilbert polynomial, one can easily check that there is no wall (cf. \cite[Proposition 4.9]{cc1.5}). Thus the pair $s$ is also a $0^+$-stable pair and hence $\cE xt^1(I_{Z,C},\cO(-3))\otimes \cO(3-d)$ is a stable sheaf. Then by \cite{maicandual}, we see that $F$ is stable.

Twisting by $\cO(\frac{d}{2}-1)$ the short exact sequence $\ses{I_{C,\PP^2}}{I_{Z,\PP^2}}{I_{Z,C}}$, we see that $F$ fits into the exact sequence
$$
\ses{\cO(-\frac{d}{2}-1)}{I_{Z,\PP^2}(\frac{d}{2}-1)}{F}.
$$	
Therefore the destabilizing object of $F$ is $I_{Z,\PP^2}(\frac{d-2}{2})$ where $|Z|=\frac{d-2}{2}$ and thus $F$ lies at the first wall.

When $d$ is odd, $F=\cE xt^1(I_{p,C}, \cO(-3))$ is a stable sheaf by \cite[Lemma 3.1]{iena}. From the structure sequences of $\{p\}\subset C$ and $C \subset \PP^2$, we know that $F$ fits into the exact sequence
$$
0\lr\cO(\frac{-d-3}{2})\lr\cO(\frac{d-3}{2})\lr F\lr \CC_p\lr0.
$$
Hence the destabilizing object of $F$ is $\cO(\frac{d-3}{2})$ and thus $F$ lies at the first wall.
\end{proof}
Let $aD +bA$ be a nef divisor of the moduli space $\bM_d$. To find the ratio of the positive numbers $a$ and $b$, it is suffice to compute the intersection numbers $D\cdot C$ and $A\cdot C$ for the curve $C$ in $\bM_d$ defined by the stable sheaves in Lemma \ref{wall} (\cite[Theorem 1.1]{bayer}).

\begin{prop}
The divisor class which has intersection number zero with the curve class in the wall-crossing locus is given by
$$
\left\{
  \begin{array}{ll}
    D+\frac{d(d^2-2d+4)}{8}A, & \text{if } d \text{ is even} \\
    D+\frac{(d-1)(d^2+d-4)}{8}A, & \text{if } d \text{ is odd}
  \end{array}
\right.
$$
\end{prop}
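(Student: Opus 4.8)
The plan is to invoke the characterization of the nef boundary from \cite[Theorem 1.1]{bayer} recalled just above the statement: it suffices to produce a single test curve $R\subset\bM_d$ swept out by the wall sheaves of Lemma \ref{wall} and to compute the two intersection numbers $A\cdot R$ and $D\cdot R$. The sought class is then the unique (up to scale) combination $aD+bA$ with $a(D\cdot R)+b(A\cdot R)=0$; taking $a=1$, the answer is $D-(D\cdot R)\,A$ once $A\cdot R=1$ has been checked.

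For the test curve, when $d$ is even I fix a general length $\frac{d-2}{2}$ subscheme $Z\subset\PP^2$ and let $C$ range over a pencil of degree $d$ curves containing $Z$, taking $F_C=I_{Z,C}(\frac{d-2}{2})$. Since the destabilizing object $I_{Z,\PP^2}(\frac{d-2}{2})$ of Lemma \ref{wall} depends only on $Z$, every member of this pencil lies on the first wall with the same associated graded object, so $R$ is contracted by the wall-crossing. When $d$ is odd I instead fix a general point $p$, let $C$ vary in a pencil of degree $d$ curves through $p$, and set $F_C=\cE xt^1(I_{p,C},\cO(-3))$; here the destabilizing object $\cO(\frac{d-3}{2})$ is independent of $C$, so again $R$ is contracted. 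In both cases the Fitting map $\bM_d\to|\cO_{\PP^2}(d)|$ sends $R$ bijectively onto the line in $|\cO_{\PP^2}(d)|$ cut out by the pencil, whence $A\cdot R=1$.

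It remains to compute $D\cdot R$, which I do by the Grothendieck--Riemann--Roch method of Proposition \ref{dinterms}. The total space of the pencil is the blow-up $S$ of $\PP^2$ at the $d^2$ base points, embedded as a bidegree $(1,d)$ divisor $\iota\colon S\hookrightarrow P\times\PP^2$ with $P\cong\PP^1$. Assembling the $F_C$ into a flat family $\cF$ on $P\times\PP^2$ supported on $S$ and computing $\mathrm{ch}(\cF)=\iota_{*}(\cdots)$, while separating the exceptional divisors lying over $Z$ (resp.\ over $p$) from those over the remaining base points, the even case gives
$$
\mathrm{ch}(\cF)=(p+dh)+\left(-\frac{d+2}{2}\,ph+\frac{2-3d}{2}\,h^2\right)+\frac{(d+2)^2}{8}\,ph^2,
$$
where $p$ and $h$ denote the point class on $P$ and the line class on $\PP^2$ as in Proposition \ref{dinterms}. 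Extracting the coefficient of $ph^2$ in $\mathrm{ch}(\cF)\,\mathrm{Td}(P\times\PP^2/P)\,\mathrm{ch}(-d+\cO_l)$ then yields $D\cdot R=-\frac{d(d^2-2d+4)}{8}$. The odd case is the same in outline: I build $\mathrm{ch}(\cF)$ from the four-term sequence $0\to\cO(\frac{-d-3}{2})\to\cO(\frac{d-3}{2})\to F\to\CC_p\to0$ of Lemma \ref{wall} (coming from the structure sequences of $p\subset C\subset\PP^2$), and the same coefficient extraction gives $D\cdot R=-\frac{(d-1)(d^2+d-4)}{8}$.

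Combining $A\cdot R=1$ with these values and imposing $(D+bA)\cdot R=0$ gives $b=\frac{d(d^2-2d+4)}{8}$ for $d$ even and $b=\frac{(d-1)(d^2+d-4)}{8}$ for $d$ odd, which are exactly the asserted classes. The conceptual reduction being supplied by Bayer--Macr\`i, the main obstacle is the Chern-character bookkeeping in the Grothendieck--Riemann--Roch step, not the setup. Two points need care: verifying that $\cF$ is genuinely a flat family of the stable wall sheaves of Lemma \ref{wall}, so that $R$ is an honest curve in $\bM_d$ contracted by the wall-crossing; and tracking the exceptional divisors in $\mathrm{ch}(\cF)$, since it is precisely the distinguished base points ($Z$, resp.\ $p$) that separate this computation from that of Proposition \ref{dinterms}. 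The derived dual $\cE xt^1(-,\cO(-3))$ in the odd case carries the most error-prone bookkeeping, so I would cross-check the final numbers against the relation $D=(1-d)A+L$ of Proposition \ref{dinterms}, which rewrites the two classes as $L+\frac{(d-2)^2(d+2)}{8}A$ and $L+\frac{(d-1)(d+4)(d-3)}{8}A$, in agreement with the divisor $B$ of Theorem \ref{mainthm}.
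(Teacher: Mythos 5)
Your proposal is correct and takes essentially the same route as the paper's proof: the same pencil test curves swept out by the wall sheaves of Lemma \ref{wall} (with $Z$, resp.\ $p$, placed in the base locus of the pencil), the same Grothendieck--Riemann--Roch extraction of the $ph^2$ coefficient against $\mathrm{ch}(-d+\cO_l)\,\mathrm{Td}(P\times\PP^2/P)$, and the same values $A\cdot P=1$ and $D\cdot P=-\frac{d(d^2-2d+4)}{8}$ (even), $-\frac{(d-1)(d^2+d-4)}{8}$ (odd). Your expansion of $\mathrm{ch}(\cF)$ agrees with the paper's (which keeps only the $p$-dependent term $-e^{-p-\frac{d+2}{2}h}$), and your added cross-check against $D=(1-d)A+L$ from Proposition \ref{dinterms} is consistent with Theorem \ref{mainthm}.
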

\begin{proof}
Suppose $d$ is even. By Lemma \ref{wall}, the stable sheaves $I_{Z,C}(\frac{d-2}{2})$ are contained in the first wall-crossing center where $C$ is a degree $d$ curve and $Z$ is a subscheme of $C$ of length $\frac{d-2}{2}$.
Consider any pencil $P$ of degree $d$ curves defined as in Proposition \ref{dinterms}. We use the same notation as before.
From the structure sequence, we have
$$
\ses{I_{Z,S}(\frac{d-2}{2})}{\cO_S(\frac{d-2}{2})}{\cO_{Z}(\frac{d-2}{2})}.
$$
Then $\cF:=I_{Z,S}(\frac{d-2}{2})$ is a universal family over $P$. Similarly as before, we have
$$
\ses{\cO(-1,-d+\frac{d-2}{2})}{\cO(0,\frac{d-2}{2})}{\cO_S(\frac{d-2}{2})}.
$$
Hence
\begin{align*}
D\cdot P&=\mbox{Coeffi}_{ph^2}[\mbox{ch}(\cF)\mbox{Td}(P\times \PP^2/P)\mbox{ch}(-d+\cO_l)])\\
&
=\mbox{Coeffi}_{ph^2}[(-e^{-p+(\frac{-d-2}{2})h})(-d+(1-\frac{3}{2}d)h+(1-d)h^2)])\\
&=-\frac{d(d^2-2d+4)}{8}.
\end{align*}
Since $A\cdot P=1$, we get the result.
\bigskip

For odd $d$, By Lemma \ref{wall} again, the stable sheaves $\cE xt^1(I_{p,C}, \cO(-3))$ are contained in the first wall-crossing center where $C$ is a degree $d$ curve and $p\in C$. Let
$$
\ses{I_{p,S}}{\cO_S}{\CC_p}
$$
be the structure sequence. By taking dual $\mathcal{E}xt^1(-, \cO(-3))$ followed by twisting $\cO(\frac{d-3}{2})$, we get
$$
\ses{\mathcal{E}xt^1(\cO_S(\frac{d-3}{2}), \cO(-3)))}{\mathcal{E}xt^1(I_{p,S}(\frac{d-3}{2}), \cO(-3)))}{\mathcal{E}xt^2(\CC_p(\frac{d-3}{2}), \cO(-3)))}.
$$
Then since
$$
\ses{\cO(0,\frac{-3-d}{2})}{\cO(1,\frac{d-3}{2})}{\mathcal{E}xt^1(\cO_S(\frac{d-3}{2}), \cO(-3)))},
$$
we have
\begin{align*}
  D\cdot P&=\mbox{Coeffi}_{ph^2}[e^{(p+\frac{d-3}{2}h)}(-d+(1-\frac{3}{2}d)h+(1-d)h^2)])
  \\&=-\frac{(d-1)(d^2+d-4)}{8}.
\end{align*}
This completes the proof.
\end{proof}
Combining with $D=(1-d)A+L$ in Proposition \ref{dinterms}, we obtain Theorem \ref{mainthm}. The following remark provides another method to 
prove Theorem \ref{mainthm}.  (cf. \cite[\S 4]{bertram})
\begin{rema}
Since we know the destabilizing objects, one can compute the nef divisor of the moduli space $\bM_d$.

{\bf Case 1}. If $d$ is odd,  from the proof of Lemma \ref{wall},  the destabilizing object in the first wall-crossing is given by
$$
\cO(\frac{d-3}{2}).
$$
Hence the nef divisor on the wall is given by
$$
\lambda(w)=aA+bL
$$
for $w\in K(\PP^2)$ such that $\chi(w,v)=\chi(w,v')=0$ for $v=ch(F)=dh+\frac{-3d+2}{2}h^2, v'=ch(\cO(\frac{d-3}{2}))$. 
By direct computation, we have
$$
w=-d+h+\frac{d(d^2-5)}{8}h^2.
$$
Hence,
\begin{align*}
\lambda(w)=&\lambda(-d+h-\frac{h^2}{2})+\lambda(\frac{h^2}{2}+\frac{d(d^2-5)}{8}h^2)\\
&=\frac{(d-1)(d+4)(d-3)}{8}A+L.
\end{align*}
\bigskip
{\bf Case 2}. If $d$ is even,  from the proof of Lemma \ref{wall} again, the destabilizing object is $v'=\mbox{ch}(I_Z(\frac{d-2}{2}))$. Hence we see that
$$
w=-d+h+(d(1-3/2(d+2)/2+(d+2)^2/8)-3/2+\frac{d+2}{2})h^2.
$$
By similar computation, we get
$$
\lambda(w)=\frac{(d-2)^2(d+2)}{8}A+L.
$$
\end{rema}


\section{Wall-crossing of $\bM_6$}
In this section, we will compute the Poincar\'e polynomial of the space $\bM_6$ by using the Bridgeland wall-crossing. The key issue is to find the destabilizing objects at each wall. 
We have also applied the same method to calculate the Poincar\'e polynomial of $\bM_d, d\leq5$ and $\bM(5,2)$ and rechecked the results in \cite[Corollary 5.2, Corollary 5.3]{cc1}, \cite[Theorem 1.3, Theorem 1.4]{yuan2}, \cite[Theorem 1.1]{cm}. We omit the details as the computation is similar.

In \cite{abch}, the wall-crossing for the Hilbert scheme of $n$ points on $\pt$ is described. We denote by $(Hilb^n)_k$ the $k$-th wallcrossing model of the Hilbert scheme of $n$ point on $\pp^2$. For the notational convenience, we use $\cO$ instead of $\cO_{\pt}$.

\newcommand{\mr}[2]{$\begin{array}{c} #1 \\[-1.5ex] #2 \end{array} $}
\renewcommand{\arraystretch}{1.5}

\begin{table}
\begin{tabular}{|c|c|c|c|c|m{2cm}|}
  \hline
  Wall & \mr{$Destabilizing subobject$}{${\footnotesize (Chern character)}$} & $R$ & Divisor & Generic sheaf & Description\\
  \hline
  $W_5$ & \mr{[\oo(-1) \to \oo\oplus\oo(1)]}{\scriptstyle (1,2,0)} & {$\frac{8}{3}$} & {$L+16A$} & $\cI_{Z_2,C}(2)$ & 10 points on a conic\\
  \hline
  $W_4$ & \mr{[\oo(-2) \to \oo(-2)\oplus\oo(1)]}{\scriptstyle (1,1,\frac{1}{2})} & {$\frac{7}{3}$} & {$L+11A$} & $\cO_C(1)(Z_4)$ & 6 points on a line\\
  \hline
  $W_3$ & \mr{[2\oo(-1) \to 3\oo]}{\scriptstyle (1,2,-1)} & {$\frac{\sqrt{46}}{3}$} & {$L+10A$} & $\cI_{Z_3,C}(2)(Z_1)$ & 9 points on a conic\\
  \hline
  $W_2$ & \mr{[\oo(-1) \to 2\oo]}{\scriptstyle (1,1,-\frac{1}{2})} & {$\frac{\sqrt{31}}{3}$} & {$L+5A$} & $\cI_{Z_1,C}(1)(Z_5)$ & 5 points on a line\\
  \hline
  $W_1'$ & \mr{[\oo(-2) \to 2\oo]}{\scriptstyle (1,2,-2)} & {$\frac{\sqrt{28}}{3}$} & {$L+4A$} & $\cI_{Z_4,C}(2)(Z_2)$ & 8 points on a conic\\
  \hline
  $W_1$ & \mr{[2\oo(-2) \to \oo(-1)\oplus 2\oo]}{\scriptstyle (1,3,-\frac{7}{2})} & {$\frac{5}{3}$} & {$L+3A$} & $\cI_{Z_8,C}(3)$ & 10 points on a cubic\\
  \hline
  $W_0$ & \mr{[\oo]}{\scriptstyle (1,0,0)} & {$\frac{4}{3}$} & {$L$} &  & Collapsing wall\\
  \hline
  \end{tabular}
  \caption{Wall-crossing for $\bM_6$}\label{tab:M6}
\end{table}

%

The Bridgeland walls are described in Table \ref{tab:M6}. Since the sheaf in $\bM_6$ has Chern character $(0,6,-8)$, the potential walls are semicircles centered at $(-\frac{4}{3},0)$ of radius
\[
R=\sqrt{\left(\frac{4}{3}\right)^2+ \frac{6d'+8c'}{3r'} },
\]
where $(r',c',d')$ is the Chern character of destabilizing subobjects. By the same technique as in \cite[\S6]{bertram}, it is elementary to check that at actual wall $r'$ must be $1$ and $R$ must be one of those listed in Table \ref{tab:M6}. Moreover, from the classification of sheaves in $\bM_6$ in terms of the locally free resolution of its sheaves carried out in \cite{maican6}, it is clear that such potential walls are actual walls having destabilizing subobjects listed on the table. We have matched the numbering of the walls with the numbering of the corresponding stratum in \cite{maican6}, that is the locus flipped at wall $W_5$ is the stratum $X_5$ and so on. Note that the stratum $X_1$ is the union of wall-crossing loci for two walls $W_1$ and $W_1'$.

In the last two columns of Table \ref{tab:M6}, geometric description of a generic sheaf in each locus is given. Here, $C$ is a plane sextic curve and $Z_n$ is length $n$ subscheme of $C$. A generic sheaf $F$ in $\bM_6$ fits into an exact sequence
\[\ses{\cO_C}{F}{\cO_Z},\]
where $C$ is a sextic curve and $Z$ is a subscheme of $C$ of length $10$ in general position \cite{cc1}. If $Z$ fails to be in general position, the corresponding sheaf belongs to one of the wall-crossing loci. For example, assume $10$ points lie on some conic. Then by Bezout's theorem, the intersection of this conic and $C$ consists of $12$ points. The corresponding sheaf is determined by the remaining two points. This can be seen by a similar argument as in Lemma \ref{lem:sheafequi}.

By Proposition \ref{codim1}, we know the birational model corresponding to the chamber between $W_0$ and $W_1$ is $\bQ_6$. Recall that $\bQ_6$ is a $\PP^{17}$-bundle over $\mathbf{N}_6$ (\S 2). The Poincar\'e polynomial of the moduli space of Kronecker modules can be computed by the techniques of \cite{eschow} or \cite{drezet}. In \cite{reineke} can be found a recursive formula for the Poincar\'e polynomials, which can be implemented on a Mathematica program.
\begin{theo}
  The Poincar\'e polynomial of $\mathbf{N}_6$ is\\
  \medskip
 $ P(\mathbf{N}_6)= 1+q+3q^2+5q^3+10q^4+14q^5+23q^6+30q^7+41q^8+46q^9+51q^{10}+46q^{11}+41q^{12}+ 30q^{13}+ 23q^{14}+ 14q^{15}+ 10q^{16}+ 5q^{17}+ 3q^{18}+q^{19}+q^{20}.$
\end{theo}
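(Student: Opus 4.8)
The plan is to reduce the computation of $P(\mathbf{N}_6)$ to a point count over finite fields and then apply the Harder--Narasimhan recursion of Reineke (\cite{reineke}). Recall that $\mathbf{N}_6=N(3;5,4)$ is the moduli space of semistable $3$-Kronecker modules with dimension vector $(5,4)$ under King stability with stability vector $\theta=(4,-5)$: the condition $e'f-ef'\le 0$ of the definition is exactly $\theta\cdot(e',f')\le 0$. Since $\gcd(5,4)=1$, every semistable module is stable, so $\mathbf{N}_6$ is a smooth projective variety of dimension $1-\langle(5,4),(5,4)\rangle=1-(25+16-60)=20$, matching the degree of the claimed polynomial. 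Such quiver moduli have cohomology concentrated in even degrees and of Tate type, so by the Weil conjectures the number of $\mathbb{F}_q$-points is a polynomial in $q$ whose value, with $q$ recording cohomological degree $2i$, is the Poincar\'e polynomial. Thus it suffices to compute $\#\mathbf{N}_6(\mathbb{F}_q)$.

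First I would write down the stacky point count of the full representation space. The space of all $(5,4)$-dimensional representations is an affine space of dimension $3\cdot 5\cdot 4=60$, acted on by $\mathrm{GL}_5\times\mathrm{GL}_4$, so the groupoid cardinality of all modules is
\[
\frac{q^{60}}{|\mathrm{GL}_5(\mathbb{F}_q)|\,|\mathrm{GL}_4(\mathbb{F}_q)|}.
\]
The Harder--Narasimhan stratification with respect to $\theta$ expresses this count as a finite sum, over all HN types (chains of sub-dimension vectors of strictly decreasing slope summing to $(5,4)$), of products of the stacky counts of the semistable strata of the constituent dimension vectors. Reineke's resolved recursion inverts this system of equations and yields an explicit alternating sum, over all ordered decompositions of $(5,4)$ into sub-dimension vectors of non-increasing slope, of the universal series attached to each part; solving it gives a closed rational expression for the stacky count $P^{ss}_{(5,4)}(q)$ of the semistable (equivalently, stable) locus.

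Finally I would pass from the moduli stack to the variety. Because stability equals semistability and a stable module has automorphism group exactly the scalars $\mathbb{G}_m$, the moduli stack is a $\mathbb{G}_m$-gerbe over $\mathbf{N}_6$, whence $\#\mathbf{N}_6(\mathbb{F}_q)=(q-1)\,P^{ss}_{(5,4)}(q)$. Evaluating this finite sum and simplifying the resulting rational function produces the point-count polynomial, and substituting the formal variable for $q$ gives $P(\mathbf{N}_6)$. The alternative routes through the techniques of Dr\'ezet or Ellingsrud--Str\o mme (\cite{drezet}, \cite{eschow}) lead to the same formula.

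The main obstacle is one of arithmetic bookkeeping rather than of principle: the number of sub-dimension vectors and HN types bounded by $(5,4)$ is large, and one must verify that Reineke's alternating sum of ratios of $q$-factorials collapses to an honest polynomial. This is exactly the step best carried out by implementing the recursion of \cite{reineke} on a computer algebra system. Two independent checks validate the output: the polynomial must be palindromic of degree $20$ by Poincar\'e duality on the smooth projective $\mathbf{N}_6$, and its extreme coefficients must equal $1$ since $\mathbf{N}_6$ is connected with one-dimensional top cohomology; both hold for the stated answer.
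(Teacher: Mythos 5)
Your proposal is correct and takes essentially the same route as the paper: the paper likewise obtains $P(\mathbf{N}_6)$ by Reineke's Harder--Narasimhan recursion \cite{reineke} (citing \cite{eschow} and \cite{drezet} as alternatives) implemented on a computer algebra system. The justification you supply---coprimality of $(5,4)$ forcing stability $=$ semistability, purity so that the $\mathbb{F}_q$-point count is the Poincar\'e polynomial, and the $(q-1)$ gerbe factor---is the standard background the paper leaves implicit, and your sanity checks (palindromicity in degree $20$, extreme coefficients $1$) are consistent with the stated answer.
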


We will compute the Poincar\'e polynomial of $\bM_6$ from that of $\bQ_6$ by analyzing wall-crossing contributions. We denote $E_i^\pm$ the exceptional locus of the moduli spaces near each wall. By Table \ref{tab:M6}, the exceptional loci are certain projective bundles over a product of moduli spaces of Bridgeland stable objects. As the generic elements of these moduli spaces are given by a subscheme of certain length, they are birational models of Hilbert schemes. The birational geometry in terms of Bridgeland stability condition is studied in \cite{abch}, with which we compare our stability condition to figure out which birational model must be used. The same technique has been used in \cite[\S6]{bertram}.

We will only consider two walls $W_1$ and $W_4$, as the computation is similar at other walls.
At $W_1$, elements $F$ in $E_1^+$ fits into an exact sequence
\[\ses{[2\oo(-2) \to \oo(-1)\oplus 2\oo]}{F}{\oo(-3)[1]}.\]
The complex $[2\oo(-2) \to \oo(-1)\oplus 2\oo]$ has Chern character $(1,3,-\frac{7}{2})$, and is generically equivalent to $I_{Z_8}(3)$.

By \cite{abch}, the Bridgeland walls for $Hilb^8$ are the following semicircles $W^8_x$ with center at $(x,0)$ and radius $\sqrt{x^2-16}$.
\[ W^8_{-\frac{17}{2}}, W^8_{-\frac{17}{2}},W^8_{-\frac{15}{2}},W^8_{-\frac{13}{2}},W^8_{-\frac{11}{2}},W^8_{-5},W^8_{-\frac{9}{2}},W^8_{-\frac{25}{6}}, \]
where above the wall $W^8_{-\frac{17}{2}}$ we have the Hilbert scheme and $W^8_{-\frac{25}{6}}$ is the collapsing wall. The superscript $8$ indicates that they are walls for $Hilb^8$. By the elementary check following \cite{abch}, the twisting by $\oo(3)$ moves these walls horizontally by 3. By abusing notations, we also denote by $W_x$ the semicircle with center $(x+3,0)$ and radius $\sqrt{x^2-16}$. Since our wall $W_1$ is the semicircle centered at $(-\frac{4}{3},0)$ of radius $\frac{5}{3}$, one can see that $W_1$ is located in between $W^8_{-\frac{9}{2}}$ and $W^8_{-\frac{25}{6}}$. Hence the corresponding moduli space is $(Hilb^8)_6$, which is the sixth and the final birational model of $Hilb^8$. Later we will show this space is isomorphic to a $Gr(2,9)$-bundle over $\pp^2$. By an elementary calculation we check
\begin{align*}
  Ext^1(\oo(-3)[1],[2\oo(-2) \to \oo(-1)\oplus 2\oo]) \simeq \CC^{20},\\
  Ext^1([2\oo(-2) \to \oo(-1)\oplus 2\oo],\oo(-3)[1]) \simeq \CC^{2}.
\end{align*}
Hence $E_1^-$ is $\pp^1$-bundle over $(Hilb^8)_6$, whereas $E_1^+$ is $\pp^{19}$-bundle over $(Hilb^8)_6$.

The wall $W_4$ is more interesting. The element $F$ in $E_4^+$ fits into an exact sequence
\[\ses{[\oo(-2) \to \oo(-2)\oplus\oo(1)]}{F}{[2\oo(-3) \to \oo(-1)]}.\]
The complex ${[2\oo(-3) \to \oo(-1)]}$ is equivalent to $\cI_{Z_4}^\vee(-5)[1]$, where $\vee$ indicates the derived dual $\mathcal{RH}om(-,\cO)$. Indeed, the generic ideal sheaf $\cI_{Z_4}$ is equivalent to ${[\oo(-4) \to 2\oo(-2)]}$, and by taking the derived dual, twisting by $-5$, and shifting by $1$, we obtain ${[2\oo(-3) \to \oo(-1)]}$. All these operations induce isomorphisms on the moduli spaces, but we must find walls for the Hilbert scheme after taking these operations.

By \cite{abch}, the Bridgeland walls for $Hilb^8$ are the following semicircles $W_x$ with center at $(x,0)$ and radius $\sqrt{x^2-8}$.
\[ W^4_{-\frac{9}{2}}, W^4_{-\frac{7}{2}},W^4_{-3}, \]
where above the wall $W^4_{-\frac{9}{2}}$ we have $Hilb^8$ and $W^4_{-3}$ is the collapsing wall. After the above operations, we find that the centers of walls move to $(-5-x,0)$. For example, in \cite{abch} the wall $W^4_{-\frac{9}{2}}$ corresponds to the destabilizing object $\oo(-1)$. Hence, after taking the above operations, we must use the destabilizing object $\oo(-4)[1]$, whose Chern character is $(r',c',d'):=(-1,4,-8)$. Since the Chern character of the complex ${[2\oo(-3) \to \oo(-1)]}$ is $(r,c,d):= (-1,5,-\frac{17}{2})$, from the formula
\[x= \frac{rd'-r'd}{rc'-r'c} \text{ and } R=\sqrt{x^2-2\frac{cd'-c'd}{rc'-r'c}},\]
one can check $x=-\frac{1}{2}$ and the radius does not change.
We remark that twisting by $n$ moves the walls horizontally by $n$, taking the derived dual flips the walls along the $y$-axis, and shifting does not change the wall.

By abusing notation again, we denote by $W^4_x$ the semicircle with center at $(-5-x,0)$ and radius  $\sqrt{x^2-8}$. Then the wall $W_4$ is in between $W^4_{-\frac{9}{2}}$ and $W^4_{-\frac{7}{2}}$. Hence in this case, the corresponding moduli space is $(Hilb^4)_1$.
Since we have
\begin{align*}
  Ext^1([2\oo(-3) \to \oo(-1)], [\oo(-2) \to \oo(-2)\oplus\oo(1)]) \simeq \CC^{24},\\
  Ext^1([\oo(-2) \to \oo(-2)\oplus\oo(1)],[2\oo(-3) \to \oo(-1)]) \simeq \CC^{6},
\end{align*}
$E_4^-$ is $\pp^5$-bundle over $(Hilb^4)_1$, whereas $E_4^+$ is $\pp^{23}$-bundle over $(Hilb^4)_1$.

The analysis of other walls is similar. We conclude
\begin{align}
P(\bM_6)=&P(Q_6)+(P(\PP^{19})-P(\PP^1))P((Hilb^8)_{6}))  \notag\\
& +(P(\PP^{21})-P(\PP^3))P((Hilb^4)_2\times Hilb^2)) \notag\\
& +(P(\PP^{21})-P(\PP^3))P((Hilb^5)_2\times \PP^2) \label{eq:m6wall}\\
& +(P(\PP^{23})-P(\PP^5))P((Hilb^3)_1\times \PP^2)  \notag\\
& +(P(\PP^{23})-P(\PP^5))P((Hilb^4)_1) \notag\\
& +(P(\PP^{25})-P(\PP^7))P(Hilb^2). \notag
\end{align}

The following is straightforward from the wall-crossing in \cite[\S 8]{abch}.
\begin{lemm}\label{lem:hilb}
  \begin{enumerate}
    \item $P((Hilb^3)_1)=P(Hilb^3) + (P(\pp^0) - P(\PP^3)) P(\PP^2)$.
    \item $P((Hilb^4)_1)=P(Hilb^4) + (P(\pp^1) - P(\PP^4)) P(\PP^2)$.
    \item $P((Hilb^4)_1)=P((Hilb^4)_1) + (P(\pp^0) - P(\PP^3)) P(\PP^2\times \pp^2)$.
    \item $P((Hilb^5)_2)=P(Hilb^5) + (P(\pp^2) - P(\PP^5)) P(\PP^2) +(P(\pp^1) - P(\PP^4)) P(\PP^2\times \pp^2)$.
  \end{enumerate}
\end{lemm}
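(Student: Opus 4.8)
The plan is to prove all four identities uniformly from the wall-crossing description of $Hilb^n$ in \cite[\S 8]{abch}, using the fact that Poincar\'e polynomials behave additively across the relevant birational modifications. The key preliminary observation is that every space appearing here---the Hilbert schemes $Hilb^n(\pt)$, their Bridgeland models $(Hilb^n)_k$, and all the centers below---is smooth, projective, and carries cohomology only in even degrees (the Hilbert schemes admit affine cell decompositions by Bia\l ynicki--Birula, and this property is preserved under crossing a wall). Consequently the Poincar\'e polynomial coincides with the motivic $E$-polynomial and is additive over any stratification into locally closed pieces. At each wall the moduli space is modified along an exceptional locus that is a $\PP^{a}$-bundle over a center $Z$ on the near side and a $\PP^{b}$-bundle over the same $Z$ on the far side; removing the old bundle and gluing in the new one therefore changes the Poincar\'e polynomial by exactly $(P(\PP^{b})-P(\PP^{a}))\,P(Z)$, where $a$ refers to the $Hilb^n$ side and $b$ to the modified side.

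With this formula in hand, each identity reduces to reading off, for the wall(s) being crossed, the center $Z$ and the two bundle ranks $a,b$. The walls in question are all of the same geometric type: the destabilizing subobject forces a length-$k$ subscheme to lie on a line $\ell$, while the remaining $n-k$ points stay free. Thus the center splits as a product $Z=(\pt)^{\vee}\times Hilb^{\,n-k}(\pt)$ of the dual plane parametrizing $\ell$ with the Hilbert scheme of the free points; on the $Hilb^n$ side the collinear part contributes the factor $Hilb^{k}(\PP^1)=\PP^{k}$, so $a=k$, while the rank $b$ on the modified side is computed from the relevant $\Ext^1$ between the destabilizing sub- and quotient objects, exactly as in the sample computations carried out at the walls $W_1$ and $W_4$ above. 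Concretely: for $Hilb^3$ one crosses a single wall with $Z=\pt$, $k=3$, and $b=0$, giving (1); for $Hilb^4$ the first wall has $Z=\pt$, $k=4$, $b=1$, giving (2), and the next wall has $Z=\pt\times\pt$ (a line together with one free point), $k=3$, $b=0$, giving (3); for $Hilb^5$ the first two walls have, respectively, $(Z,k,b)=(\pt,5,2)$ and $(Z,k,b)=(\pt\times\pt,4,1)$, giving the two correction terms in (4).

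Summing the per-wall contributions $(P(\PP^{b})-P(\PP^{a}))\,P(Z)$ then yields the stated formulas. The only genuine work is the bookkeeping needed to justify the reduction: one must check that the listed semicircles in \cite[\S 8]{abch} are indeed the actual walls (so that no wall is skipped in reaching $(Hilb^n)_k$), identify each destabilizing object, and verify the two $\Ext^1$ dimensions that pin down $a$ and $b$. I expect the $\Ext^1$ computations fixing the modified-side rank $b$ to be the main obstacle, since they require the explicit locally free resolutions of the (co)kernel complexes; these are entirely analogous to the computations already performed at $W_1$ and $W_4$, and we omit them.
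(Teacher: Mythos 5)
Your proposal is correct and takes essentially the same route as the paper, which offers no written proof beyond declaring the lemma ``straightforward from the wall-crossing in \cite[\S 8]{abch}'': the intended computation is precisely your per-wall replacement of a $\PP^{a}$-bundle by a $\PP^{b}$-bundle over the same center $Z$, with additivity of the Poincar\'e polynomial justified by even (cell-decomposable) cohomology, $a=k$ coming from $Hilb^{k}(\PP^1)\cong\PP^{k}$ over the dual plane, and $b$ from the $\Ext^1$ between the destabilizing sub- and quotient objects, exactly as carried out at the walls $W_1$ and $W_4$ for $\bM_6$. One small point worth flagging: item (3) as printed contains a typo (the left-hand side should read $P((Hilb^4)_2)$), and your reading of it as the second wall of $Hilb^4$, with center $\PP^2\times\PP^2$, is the intended one, consistent with the appearance of $(Hilb^4)_2$ in the displayed formula for $P(\bM_6)$.
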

It remains to find $(Hilb^8)_6$. It has the coinciding wall corresponding to two different destabilizing objects (the wall $W_{-\frac{9}{2}}$ in \cite[p. 39]{abch}). So, the corresponding wall-crossing locus is no longer irreducible. Hence, it is hard to compute the wall-crossing contribution to the Poincar\'e polynomial, because the Poincar\'e polynomial is not a motivic invariant. Compare with \cite[\S 6]{cc1}.

Instead of describing the wall-crossings, we will show directly that the final model of $Hilb^8$ is a $Gr(2,9)$-bundle over $\pp^2$.
More precisely, let $G$ denote the relative Grassmann bundle $Gr(2,\cK^*)$, where $\cK$ is the kernel sheaf of the evaluation map $\cO\otimes H^0(\cO(3))\to \cO(3)$. Then an element in $G$ is interpreted as follows. For a fixed point $p$ in $\pt$, an element in the fiber $Gr(2,9)$ represents a pencil of cubic curves passing through $p$. Since a pencil of cubic curves has 9 base points, by taking 8 points other than $p$, we have a rational map to $Hilb^8$. Conversely, given 8 points in general position, we have a unique pencil of cubics passing through them.

%

\begin{prop}\label{prop:hilb8}
The final birational model $(Hilb^8)_6$ is isomorphic to $G$.
\end{prop}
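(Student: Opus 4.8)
The plan is to realize the isomorphism by comparing universal families in both directions, ultimately reducing the statement to Zariski's Main Theorem. First I would pin down what the final chamber parametrizes. By the wall analysis above (following \cite{abch}), the collapsing wall $W^8_{-25/6}$ has destabilizing object a twist of $\oo$, and a \emph{generic} object of $(Hilb^8)_6$ is still an ideal sheaf $I_Z$ of eight points in linearly general position. For such $Z$ the cubics through $Z$ form a pencil $V=H^0(I_Z(3))$ of dimension $2$, whose base locus is a complete intersection $Z\cup\{p\}$ of nine points; the ninth point $p$ is the classical Cayley--Bacharach point. This is exactly the datum $(p,V)$ of a point of $G$, so on the locus of nine reduced base points in general position the ``ninth point'' correspondence $I_Z\mapsto(p,V)$ is a bijection. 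In particular $G$ and $(Hilb^8)_6$ are birational, and a dimension count ($\dim G=2+\dim Gr(2,9)=2+14=16=\dim Hilb^8$) confirms the expected equality of dimensions.

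Next I would upgrade this correspondence to morphisms. Over $G$ the tautological rank-two subbundle $\cS\hookrightarrow\pi^*\cK^*$ (with $\pi\colon G\to\pt$), together with the evaluation map defining $\cK$, encodes a \emph{universal pencil of cubics}; its Koszul-type complex (fibrewise $\oo(-6)\to\oo(-3)^{\oplus2}\to\oo$), corrected at the universal ninth point, produces a flat family over $G$ of complexes of Chern character $(1,0,-8)$ that are Bridgeland-stable in the final chamber. By the universal property of $(Hilb^8)_6$ as a moduli space of Bridgeland-stable objects (\cite{bayer,abch}), this family induces a morphism $\Phi\colon G\to(Hilb^8)_6$ extending the generic correspondence. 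In the other direction I would build the inverse $\Psi\colon(Hilb^8)_6\to G$ intrinsically: send a stable object $E$ to the pencil $\Hom(\oo(-3),E)=H^0(E(3))$ together with the point $p$ recovered as the support of the cokernel of the Koszul map $I_{Z\cup p}\hookrightarrow E$ of that pencil. The essential content is that $h^0(E(3))=2$ for \emph{every} object of the chamber, which is what makes $\Psi$ a genuine morphism; one then checks $\Phi$ and $\Psi$ are mutually inverse on the dense open locus, hence everywhere.

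The main obstacle is the behavior over the non-generic locus, precisely the coinciding wall $W^8_{-9/2}$ where, as already noted, the wall-crossing center is reducible and the Poincar\'e polynomial fails to be motivic. One must verify that $\Psi$ extends across this locus and that $\Phi$ stays quasi-finite there: special pencils---those with non-reduced base scheme, with the eight residual points on a line or a conic, or with infinitely near points---must each correspond to a \emph{single} stable object, with no positive-dimensional fibre appearing. I would handle this by proving $h^0(E(3))=2$ \emph{uniformly}, ruling out cohomology jumps by bounding $\Hom(\oo(-3),E)$ through the semistability inequalities forced by membership in the final chamber; this makes $\Psi$ defined on all of $(Hilb^8)_6$ and the composition $\Psi\circ\Phi=\mathrm{id}_G$ forces $\Phi$ to have finite fibres. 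Since $G$ is projective, $\Phi$ is proper; being birational and quasi-finite onto the normal variety $(Hilb^8)_6$ (indeed smooth, as a fine moduli of Bridgeland-stable objects on $\pt$), Zariski's Main Theorem then upgrades $\Phi$ to an isomorphism, which completes the proof.
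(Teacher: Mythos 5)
Your strategy---construct a universal family over $G$, use the universal property of the Bridgeland moduli space to obtain $\Phi\colon G\to (Hilb^8)_6$, and finish with Zariski's Main Theorem---is genuinely different from the paper's, but as written it has two real gaps, both located exactly where the geometry is hardest. First, your family over $G$ is only constructed where the pencil of cubics has zero-dimensional base locus: on the locus of pencils with a common component the base scheme is positive-dimensional, there is no ninth point, and the ``Koszul complex corrected at the universal ninth point'' does not exist; yet $\Phi$ must be defined there (this locus is nonempty, merely of codimension at least two in $G$). A birational map between smooth projective varieties defined only off codimension two need not extend---flips are exactly such maps---so without producing a flat family of stable objects (necessarily two-term complexes, not sheaves) over this locus you only have a rational map, and ZMT cannot be invoked. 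Second, the uniform statement $h^0(E(3))=2$, equivalently the vanishing of $\Ext^1(\oo(-3),E)$ for \emph{every} object of the final chamber, is the actual content of the proposition and is asserted rather than proved: note $\chi(E(3))=2$ is automatic from Riemann--Roch, so the claim is a cohomology-jump statement, and the objects of $(Hilb^8)_6$ sit below six walls including the coinciding wall $W^8_{-\frac{9}{2}}$ with reducible wall-crossing locus, which the paper explicitly flags as the obstruction to stratum-by-stratum analysis; the required bound would have to be carried out in the tilted heart for honest complexes. (Your generic Cayley--Bacharach description and the dimension count $\dim G=2+14=16$ are correct.)

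The paper sidesteps both difficulties with a purely divisor-theoretic argument: the ninth-point correspondence gives a birational map between $Hilb^8$ and $G$ that is an isomorphism away from codimension two on both sides, which identifies the rank-two Picard groups and hence the effective cones; one extremal ray is the Hilbert--Chow ray, and by \cite[\S 10.7]{abch} the other extremal ray is the pull-back to $G$ of the ample class of $\pp^2$, hence semiample on $G$; since $Hilb^8$ is a Mori dream space, the model attached to the terminal chamber of the Mori decomposition is the one on which this extremal divisor becomes semiample, so $G$ is the final model $(Hilb^8)_6$. If you wish to salvage your approach, the honest version of the missing family construction is essentially the non-reductive quotient description in the remark following the proposition (the resolution $2\oo(-5)\to \oo(-4)\oplus 2\oo(-3)$ of general ideal sheaves and the result of Dr\'ezet--Trautmann \cite{dretrau}), while proving the uniform $h^0$-vanishing directly would amount to reclassifying all stable objects below the coinciding wall---precisely the work the paper's argument is designed to avoid.
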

\begin{proof}
By the above argument, we see that $Hilb^8$ is birational to a $G$.
The correspondence fails to be an isomorphism along the locus in $Hilb^8$ of the eight points not in general position and along the locus in $G$ where the corresponding pencil of cubic curves has a common component. Both of them are obviously of codimension at least two.

Since both of $Hilb^8$ and $G$ have the Picard group of rank two, the effective cone of two spaces are naturally identified. One of the ray generators of the effective cone of the Hilbert scheme corresponds to the Hilbert-Chow morphism. By the description in \cite[\S 10.7]{abch}, the other ray generator of the effective cone is the pull-back to $G$ of the ample divisor of $\PP^2$. Since $Hilb^8$ is a Mori dream space, $G$ is the final birational model of $Hilb^8$.
\end{proof}
\begin{rema}
It is well-known that the general point in $Hilb^8$ has the free resolution $2 \cO(-5) \rightarrow \cO(-4)\oplus 2\cO(-3)$. By the result in \cite{dretrau}, the non-reductive quotient of the sheaf homomorphisms in $ \Hom (2 \cO(-5), \cO(-4)\oplus 2\cO(-3))$ is exactly the $Gr(2,9)$-bundle over $\PP^2$.
\end{rema}

Combining \eqref{eq:m6wall}, Lemma \ref{lem:hilb}, and Proposition \ref{prop:hilb8}, we get the Poincar\'e polynomial of the moduli space $\bM_6$.
\begin{theo}\label{thm:m6}

  $P(\bM_6)=\displaystyle (1+q+4q^2+7q^3+16q^4+25q^5+47q^6+68q^7+104q^8+128q^9+146q^{10}+128q^{11}+104q^{12}+ 68q^{13}+ 47q^{14}+ 25q^{15}+ 16q^{16}+ 7q^{17}+ 4q^{18}+q^{19}+q^{20} )\frac{1-q^{18}}{1-q}.$
\end{theo}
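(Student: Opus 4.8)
The plan is to treat Theorem \ref{thm:m6} as the final bookkeeping step: substitute every ingredient of the wall-crossing formula \eqref{eq:m6wall} and simplify. Thus the work consists of computing each Poincar\'e polynomial on the right-hand side and then carrying out the polynomial arithmetic. The projective-space factors $P(\PP^k)=\frac{1-q^{k+1}}{1-q}$ are immediate, so the substantive inputs are $P(Q_6)$, the Poincar\'e polynomials of the plain Hilbert schemes $Hilb^n(\pt)$ for $n=2,3,4,5,8$, the intermediate birational models $(Hilb^n)_k$ occurring in the formula, and the final model $(Hilb^8)_6$.

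First I would compute $P(Q_6)$. Since $\bQ_6$ is a $\PP^{17}$-bundle over $\mathbf{N}_6$ (\S 2), the projective bundle formula gives $P(Q_6)=P(\PP^{17})\,P(\mathbf{N}_6)=\frac{1-q^{18}}{1-q}\,P(\mathbf{N}_6)$, where $P(\mathbf{N}_6)$ is the degree-$20$ polynomial supplied by the preceding theorem. Next I would record the Poincar\'e polynomials of $Hilb^n(\pt)$, which are classical (G\"ottsche's formula); these feed into Lemma \ref{lem:hilb} to produce the intermediate models $(Hilb^3)_1,(Hilb^4)_1,(Hilb^4)_2,(Hilb^5)_2$ that appear in \eqref{eq:m6wall}. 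For the one genuinely non-elementary base, $(Hilb^8)_6$, I would invoke Proposition \ref{prop:hilb8}, which identifies the final model with the Grassmann bundle $G=Gr(2,\cK^*)$ over $\pt$; hence $P((Hilb^8)_6)=P(\pt)\cdot\binom{9}{2}_q$, with $\binom{9}{2}_q=\frac{(1-q^8)(1-q^9)}{(1-q)(1-q^2)}$ the Poincar\'e polynomial of $Gr(2,9)$. The products in \eqref{eq:m6wall}, such as $(Hilb^4)_2\times Hilb^2$, are then handled by multiplying the corresponding polynomials.

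With every factor in hand, the remaining step is to assemble \eqref{eq:m6wall} term by term and collect coefficients. I expect the resulting degree-$37$ polynomial to factor as $\frac{1-q^{18}}{1-q}$ times the palindromic degree-$20$ polynomial in the statement; pulling out this factor is the cleanest presentation, and its existence is a strong internal consistency check, since $\bQ_6$ already carries the factor $P(\PP^{17})$ while the six correction terms involve projective-space factors of differing dimensions, so divisibility of the whole by $P(\PP^{17})$ is a nontrivial outcome of the full computation. Two further sanity checks guide the arithmetic: the total must be palindromic of degree $\dim_{\CC}\bM_6=37$ by Poincar\'e duality, and its leading and trailing coefficients must equal $1$.

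The main obstacle is purely the volume and accuracy of the polynomial manipulation: there are six correction terms, several of them products of two nontrivial Poincar\'e polynomials, and a single sign or coefficient slip propagates into the final collection. In particular the terms built on $(Hilb^4)_2$, $(Hilb^5)_2$, and $(Hilb^8)_6$ each require assembling an intermediate base polynomial before multiplying by the bundle factor, so I would compute and tabulate these bases in isolation, verify that each is palindromic of the expected degree, and only then substitute into \eqref{eq:m6wall}. The conceptual content has already been established in the derivation of \eqref{eq:m6wall}, in Lemma \ref{lem:hilb}, and in Proposition \ref{prop:hilb8}; what remains is to execute the arithmetic carefully and confirm the predicted factorization.
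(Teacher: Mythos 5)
Your proposal is correct and follows exactly the paper's own route: the paper proves Theorem \ref{thm:m6} precisely by combining the wall-crossing formula \eqref{eq:m6wall}, Lemma \ref{lem:hilb}, and Proposition \ref{prop:hilb8} (with $P(\bQ_6)=P(\PP^{17})P(\mathbf{N}_6)$ and $P((Hilb^8)_6)=P(\PP^2)\cdot\frac{(1-q^8)(1-q^9)}{(1-q)(1-q^2)}$, as you say), and then carrying out the polynomial arithmetic. Your added consistency checks (palindromicity in degree $\dim\bM_6=37$ and the factorization by $\frac{1-q^{18}}{1-q}$) are sensible safeguards but do not change the argument.
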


\begin{rema}
  The moduli space $\bM_d$ is believed to be closely related to BPS invariant. In \cite{kkv}, this is explained up to degree five by looking at the Euler characteristics of relative Hilbert scheme together with some error terms. In \cite{cc1}, it is shown that the error term can be understood as a wall-crossing contribution among the moduli spaces of stable pairs. (See also \cite[\S9.3]{ckk}.) When the degree exceeds five, due to the fact that the relative Hilbert scheme is no longer smooth and the existence of walls with more than one destabilizing objects, it is difficult to apply their method.

  In \cite{hkk}, by using holomorphic anomaly equation, the refined BPS indices up to degree 9 have been computed, which gives a prediction for the Poincar\'e polynomial of $M_d$ by a conjectural formula of \cite{ckk}. We remark here that Theorem \ref{thm:m6} coincides with the prediction.

  Mathematically, the Poincar\'e polynomials of $\bM_d$ up to degree 5 have been computed in \cite{cc1,yuan2,cm,maicantorus} by several methods.
\end{rema}

\bibliographystyle{amsplain}

\end{document}